\newtheorem{theorem}{Theorem}[section]
\newtheorem*{theorem*}{Theorem}
\newtheorem*{lemma*}{Lemma}
\newtheorem*{prop*}{Proposition}
\newtheorem{lemma}[theorem]{Lemma}
\newtheorem{cor}[theorem]{Corollary}
\newtheorem{prop}[theorem]{Proposition}
\newtheorem{conj}[theorem]{Conjecture}
\newtheorem{op}[theorem]{Open Problem}
\theoremstyle{definition}
\newtheorem{defn}[theorem]{Definition}
\newcommand{\T}{{\intercal}}
\DeclareMathOperator*{\argmax}{arg\!\max}
\newcommand{\supp}{\text{supp}}
\newcommand{\R}{\mathbb{R}}
\begin{document}

\title[Small Shadows of Lattice Polytopes]{Small Shadows of Lattice Polytopes}



\author[A.~Black]{Alexander E. Black}

\address[AB]{Dept.\ Math., UC Davis, Davis, CA 95616, USA}
\email{aeblack@ucdavis.edu}


\begin{abstract}
 The diameter of the graph of a $d$-dimensional lattice polytope $P \subseteq [0,k]^{n}$ is known to be at most $dk$ due to work by Kleinschmidt and Onn. However, it is an open question whether the monotone diameter, the shortest guaranteed length of a monotone path, of a $d$-dimensional lattice polytope $P = \{\mathbf{x}: A\mathbf{x} \leq \mathbf{b}\} \subseteq [0,k]^{n}$ is bounded by a polynomial in $d$ and $k$. This question is of particular interest in linear optimization, since paths traced by the Simplex method must be monotone. We introduce partial results in this direction including a monotone diameter bound of $3d$ for $k = 2$, a monotone diameter bound of $(d-1)m+1$ for $d$-dimensional $(m+1)$-level polytopes, a pivot rule such that the Simplex method is guaranteed to take at most $dnk||A||_{\infty}$ non-degenerate steps to solve a LP on $P$, and a bound of $dk$ for lengths of paths from certain fixed starting points. Finally, we present a constructive approach to a diameter bound of $(3/2)dk$ and describe how to translate this final bound into an algorithm that solves a linear program by tracing such a path.
\end{abstract}

\maketitle


\section{Introduction}

Despite 75 years of study, the existence of a pivot rule such that the Simplex method runs in polynomial time remains a fundamental open question in the theory of linear programming. A core difficulty in studying this question comes from our lack of understanding of diameters of polytopes. Namely, the Simplex method solves a linear program (LP) by walking along the graph of the polyhedron that forms its feasible region, and the length of the path followed governs the run-time. Furthermore, the path must be \emph{monotone} meaning that each step of the path must improve the linear objective function. The worst-case distance to the unique sink of the directed graph of a polytope across all orientations induced by generic linear objective functions is called the \emph{monotone diameter} of the polytope. Bounding the monotone diameter is a central problem in the study of the run-time in the Simplex method. The focus of our work is on bounding monotone diameters of $(0,k)$-lattice polytopes (i.e., lattice polytopes contained in $[0,k]^{n}$ for some $n$).

 A strengthening of the notion of monotone diameter is the existence of an \emph{edge-pivot rule}, a rule for choosing an improving neighbor among all improving neighbors at a given vertex as discussed in \cite{PivRulesforCircAug}. For non-degenerate LPs, edge-pivot rules and pivot rules for the Simplex method are the same, but they differ in general due to degeneracy. While we do not specify it in the theorem statements, all of our monotone diameter results are constructive in the sense that we not only bound monotone diameters but also find edge-pivot rules for following those paths. In some cases, we are able to do better than an edge-pivot rule and find a proper pivot rule for the Simplex method. An example of this is a shadow pivot rule, which we use to indicate a pivot rule given by a shadow in the language of Section $4$ of \cite{01simplex} and defined here in Section \ref{subsec:shadrule}. For degenerate LPs, we make a distinction between non-degenerate and degenerate pivots. Namely, a non-degenerate pivot is a pivot that corresponds to walking along an edge of the polytope, whereas a degenerate pivot stays at the same vertex. In our work here, we only ever bound non-degenerate steps, since they are the algorithmic analog to monotone diameter bounds.
 
 With all of this in mind, we may state our first result for $k = 2$. In combinatorial optimization, these polytopes often arise as relaxations of $0/1$-polytopes as with the fractional matching polytope for example. In that context, they are called \emph{half-integral polytopes} (see \cite{CombOpt} for many examples). We stick to that language for the following result:

\begin{theorem}\label{mainthm:HalfBound}
Let $P \subseteq \R^{n}$ be a half-integral polytope of dimension $d$. Then the monotone diameter of $P$ is at most $3d$. A bound of $d+2n$ may be attained by an edge-pivot rule.
\end{theorem}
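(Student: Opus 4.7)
For the $3d$ bound, the plan is induction on $d$. Given a half-integral polytope $P$ of dimension $d$, objective $c$, starting vertex $v$, and $c$-sink $v^*$, half-integrality forces $v^* \in \{0,1,2\}^n$. If $P$ is a single point the claim is trivial; otherwise some coordinate $i$ has $v^*_i \in \{0,2\}$, and after possibly flipping the sign of $x_i$ we may assume $v^*_i = 2$. Then $F = P \cap \{x_i = 2\}$ is a half-integral polytope of dimension at most $d-1$ containing $v^*$, and the inductive step reduces to the following lemma.

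\begin{lemma*}
There is a $c$-monotone path in $P$ from $v$ to some vertex of $F$ of length at most $3$.
\end{lemma*}

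Granting the lemma, the induction hypothesis applied to $F$ produces a $c$-monotone path within $F$ of length at most $3(d-1)$, and concatenation yields a monotone path in $P$ of length at most $3d$.

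To prove the lemma, I would pass to the two-dimensional shadow $\pi(P) \subseteq \R^2$ of $P$ onto $\mathrm{span}(c, e_i)$. The shadow is a convex polygon whose vertices lift to vertices of $P$, and each shadow vertex has second coordinate in $\{0,1,2\}$. Because $\pi(v^*)$ sits at the top-right of $\pi(P)$, starting at $\pi(v)$ and traversing the upper boundary rightward gives a $c$-monotone path in $P$ that meets $F$ the moment its second coordinate reaches $2$. Since the second coordinate passes through at most three distinct levels, a careful choice of perturbation should guarantee at most three shadow vertices before the hit. The main obstacle of the whole proof is making this last step rigorous: showing that once the boundary enters a new level of $x_i$, it does not produce additional shadow vertices at that level before advancing again.

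For the edge-pivot rule of length $d + 2n$, I would design an explicit rule guided by a potential $\Phi(v) = \Phi_{\dim}(v) + \Phi_{\mathrm{coord}}(v)$, where $\Phi_{\mathrm{coord}}(v) = \|v - v^*\|_1 \leq 2n$ is controlled by half-integrality and $\Phi_{\dim}(v)$ is a $d$-bounded quantity such as the number of facet-defining inequalities active at $v^*$ but slack at $v$. The rule would at each non-sink vertex select an improving edge that decreases $\Phi$ by at least $1$, so the starting bound $\Phi(v) \leq d + 2n$ yields the conclusion. The main obstacle here is proving existence of such an edge at every non-sink vertex, likely via a local convex-geometric analysis at $v$ combining the structure of the improving cone of $c$ with the combinatorial constraints on edge directions imposed by half-integrality.
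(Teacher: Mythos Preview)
Your inductive plan for the $3d$ bound has two genuine gaps.

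First, the reduction step breaks immediately: it is \emph{not} true that a non-trivial half-integral polytope must have its $\mathbf{c}$-optimum $v^\ast$ with some coordinate in $\{0,2\}$. Take $P=\conv\{(1,1),(0,2),(2,2)\}\subseteq\R^2$ and $\mathbf{c}=(0,-1)$; then $v^\ast=(1,1)$ and no coordinate hyperplane $\{x_i=0\}$ or $\{x_i=2\}$ contains $v^\ast$, so there is no face $F$ of the kind you want to induct into.

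Second, even granting the existence of such an $F$, your shadow argument for the lemma cannot be launched from an \emph{arbitrary} starting vertex $v$. The projection $\pi(v)$ onto $\mathrm{span}(\mathbf{c},\mathbf{e}_i)$ need not lie on the boundary of $\pi(P)$ at all; if it lands in the interior there is no ``upper boundary traversal'' to speak of, and no perturbation fixes this. The shadow-vertex rule requires that the start be the $\mathbf{c}$-maximum of the $\mathbf{e}_i$-minimal (or maximal) face, which an arbitrary $v$ is not.

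This is precisely what the paper's proof addresses, and its structure is quite different from yours. There is no induction on dimension and no face $F$ through $v^\ast$. Instead (after recentring so vertices lie in $\{-1,0,1\}^d$) the paper runs a two-phase algorithm from $v$: in Phase~1, at most $d$ $\mathbf{c}$-improving steps are taken, each strictly enlarging the support of the current vertex, until one reaches a vertex $\mathbf{v}^k$ that is $\mathbf{c}$-maximal on the face $F_k=\{\mathbf{x}\in P:\mathbf{x}_j=\mathbf{v}^k_j\text{ for }j\in\supp(\mathbf{v}^k)\}$. The point is that $F_k$ is exactly the $(-\mathbf{v}^k)$-minimal face, so $\mathbf{v}^k$ is now a legitimate starting point for a shadow path. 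Phase~2 then follows the $\mathbf{c}$-coherent $(-\mathbf{v}^k)$-monotone path; since $(-\mathbf{v}^k)^\T$ takes at most $2|\supp(\mathbf{v}^k)|+1$ values on $\{-1,0,1\}^d$, this path has length at most $2|\supp(\mathbf{v}^k)|\le 2d$. The $d+2n$ variant is the same argument without the full-dimensional reduction, giving $|\supp(\mathbf{v}^k)|\le n$.

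Your potential-function sketch for $d+2n$ is not yet a proof: you would need to exhibit, at every non-optimal vertex, a $\mathbf{c}$-improving edge that strictly decreases $\Phi$, and nothing in the half-integral structure obviously supplies such an edge for the $\Phi$ you propose.
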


 These bounds are off by a factor of $2$ from the best possible diameter bound of $(3/2)d$ found in \cite{dplattdiam}. We leave whether the gap between these bounds can be improved as an open question. In the case of 0/1-polytopes, we showed in Lemma $3$ of \cite{01simplex} that the bound of $d$ could be improved when the set of vertices all had the same number of nonzero coordinates. For half-integral polytopes, we may prove an analogous result:
 
 \begin{prop}\label{mainprop:Sparsebound}
 Let $P \subseteq \R^{n}$ be a half-integral polytope of dimension $d$, and suppose that the number of coordinates equal to $1/2$ is a constant $n-s$ for each vertex of $P$. Then there is a shadow pivot rule such that the total number of non-degenerate steps taken to solve an LP on $P$ is at most $2s$. 
 \end{prop}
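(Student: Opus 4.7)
The plan is to use a shadow pivot rule on the $2$-plane spanned by the objective $c$ and the all-ones vector $\mathbf{1}$. First, I would verify that for every vertex $v$ of $P$, one has $\langle \mathbf{1}, v\rangle = \tfrac{n-s}{2} + t_v$, where $t_v \in \{0, 1, \ldots, s\}$ counts the coordinates of $v$ equal to $1$, because the $n - s$ half-integer coordinates contribute exactly $\tfrac{n-s}{2}$. Consequently the set $\{\langle \mathbf{1}, v\rangle : v \text{ is a vertex of } P\}$ has size at most $s+1$.

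Next, I consider the projection $\pi\colon x \mapsto (\langle c, x\rangle, \langle \mathbf{1}, x\rangle)$. The shadow $\pi(P)$ is a convex polygon in $\R^2$ whose vertices all lie on at most $s + 1$ horizontal lines. The shadow pivot rule traces the monotone path from the $c$-minimum of $\pi(P)$ to the $c$-maximum along one side, say the upper boundary. Along that boundary $\langle \mathbf{1}, \cdot\rangle$ is a concave piecewise-linear function of $\langle c, \cdot\rangle$, so each horizontal level strictly below the apex is crossed by at most two vertices of the path and the apex level by at most one. Hence the monotone path visits at most $1 + 2s$ vertices of the shadow, giving at most $2s$ edges.

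Two points will need care. I would first enforce general position by a slight perturbation of $\mathbf{1}$ (or a symbolic tie-breaking rule) so that the apex of the upper boundary is a unique vertex and distinct vertices of $P$ that share a $\pi$-image are broken cleanly; the core bound of $s + 1$ distinct values of $\langle \mathbf{1}, \cdot\rangle$ is unaffected by such a perturbation. Then I would invoke the correspondence used in Section~4 of \cite{01simplex} between edges of the shadow and non-degenerate pivots of the shadow pivot rule, so that the bound of at most $2s$ edges transfers directly to at most $2s$ non-degenerate steps of the Simplex method on $P$. The main obstacle is the bookkeeping around degeneracy: one has to ensure that degenerate pivots can be chosen so as not to create spurious non-degenerate edges, which mirrors the analysis already carried out in the $0/1$ analogue cited.
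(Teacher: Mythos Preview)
Your choice of auxiliary direction $\mathbf{d}=\mathbf{1}$ has a genuine gap: it does not depend on the starting vertex. A shadow pivot rule, as defined in the paper, must pick $\mathbf{d}$ so that the \emph{given} initial vertex $\mathbf{x}^{0}$ is the $\mathbf{c}$-maximum of the $\mathbf{d}$-minimal face; equivalently, $\mathbf{x}^{0}$ must project to a boundary vertex of the two-dimensional shadow $\pi(P)$. With $\mathbf{d}=\mathbf{1}$ (or any fixed perturbation of it) an arbitrary Phase-I vertex generally projects to the interior of $\pi(P)$, and there is no coherent path beginning there. Your argument tacitly starts at the $\mathbf{c}$-minimum of $P$, which is not what solving an LP with the Simplex method allows. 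A related problem: your perturbation claim is false as stated—perturbing $\mathbf{1}$ destroys the property that $\langle\mathbf{1},\,\cdot\,\rangle$ takes only $s+1$ values on the vertex set, so you cannot simultaneously break ties and keep the level count.

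The paper repairs exactly this point by making the auxiliary direction depend on the start. After the affine change sending vertices into $\{-1,0,1\}^{n}$ (so a coordinate equal to $1/2$ becomes $0$ and every vertex has support size exactly $s$), it takes $\mathbf{d}=-\mathbf{v}$ where $\mathbf{v}$ is the current starting vertex. Because $\mathbf{v}^{\T}\mathbf{u}\le |\mathrm{supp}(\mathbf{v})|=s$ with equality forcing $\mathbf{u}=\mathbf{v}$ (this is where the constant-support hypothesis is used), the $\mathbf{d}$-minimal face is the single point $\{\mathbf{v}\}$, so the coherent path genuinely begins at $\mathbf{v}$. The cost of this vertex-dependent choice is that $\mathbf{d}^{\T}$ now ranges over $\{-s,\dots,s\}$, i.e.\ $2s+1$ values rather than your $s+1$; since the coherent path is $\mathbf{d}$-monotone, Corollary~\ref{cor:lengthbound} still gives at most $2s$ non-degenerate steps, with no perturbation needed. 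In short, your $s+1$-level observation for $\mathbf{1}$ is correct and sharper, but it cannot be turned into a pivot rule from an arbitrary vertex; the paper trades the tighter level count for a direction that makes the starting vertex extremal.
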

 
 Another class of polytopes analogous to $(0,k)$-lattice polytopes is the set of $(m+1)$-level polytopes, which are polytopes for which every facet direction attains at most $m+1$ different values on vertices. That is, for a polytope $P = \{\mathbf{x}: A\mathbf{x} \leq \mathbf{b}\}$ with vertices $V(P)$ and such that $A\mathbf{x} \leq \mathbf{b}$ is an irredundant description, $P$ is $(m+1)$-level, when $|\{(\mathbf{a}^{i})^{\T} \mathbf{v}: \mathbf{v} \in V(P)\}| \leq m+1$ for all rows $\mathbf{a}^{i}$ of $A$. The facet directions then play a role similar to the standard basis for $(0,m)$-lattice polytopes. This class of polytopes arises in the study of Theta rank and Theta bodies for sum of squares relaxations of LPs and appears often in combinatorial optimization especially when $m = 1$ (see \cite{ThetaRank, TwoLevelEnums, TwoLevelEnums2}). It turns out that $(m+1)$-level polytopes have exactly the desired property to prove monotone diameter bounds.

\begin{theorem}\label{mainthm:klevelb}
The monotone diameter of a $(m+1)$-level polytope is at most $(d-1)m+1$.
\end{theorem}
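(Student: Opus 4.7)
The natural approach is induction on dimension $d$. The base case $d = 1$ is immediate: a $1$-dimensional polytope is a segment with monotone diameter $1 = (d-1)m+1$.

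For the inductive step, fix an objective $\mathbf{c}$ with unique optimum $v^*$ and an arbitrary starting vertex $v_0$. I would pick a facet $F$ of $P$ containing $v^*$, with outward normal $a$, so that on $V(P)$ the form $a^{\T} x$ attains at most $m+1$ distinct values $\beta_0 < \cdots < \beta_m$ with $a^{\T} v^* = \beta_m$. The facet $F$ is a $(d-1)$-dimensional polytope that is itself $(m+1)$-level, since restricting any other facet direction to $V(F) \subseteq V(P)$ can only reduce its set of attained values; by the inductive hypothesis, every vertex $w \in V(F)$ admits a $\mathbf{c}$-monotone path in $F$ to $v^*$ of length at most $(d-2)m+1$.

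It then suffices to construct a $\mathbf{c}$-monotone path in $P$ of length at most $m$ from $v_0$ to some vertex $w \in V(F)$; concatenation yields the desired bound $m + (d-2)m + 1 = (d-1)m + 1$. The candidate rule is: at each current vertex $v$ with $a^{\T} v < \beta_m$, pivot to a $\mathbf{c}$-improving neighbor $v'$ with $a^{\T} v' > a^{\T} v$. Since $a^{\T}\cdot$ then strictly increases through at most $m+1$ values, the rule reaches $F$ in at most $m$ steps.

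The main obstacle is proving that such a neighbor exists whenever $v \notin F$ and $v \neq v^*$. The chord $v^*-v$ lies in the tangent cone $T_P(v)$ with $\mathbf{c}^{\T}(v^*-v)>0$ and $a^{\T}(v^*-v)>0$, so some extreme ray of $\bar{C}:=\{d\in T_P(v) : \mathbf{c}^{\T} d\geq 0\}$ must carry positive $a^{\T}$-value; such an extreme ray is either an improving edge of $P$ (which would conclude the argument) or a combined ray $\alpha e_1+\beta e_2$ with $e_1$ improving, $e_2$ non-improving, whose positive $a$-content may come from $e_2$ alone. I expect the resolution to exploit the $(m+1)$-level structure via the shadow of $P$ under $\pi(x)=(\mathbf{c}^{\T} x, a^{\T} x)$: the upper arc of this two-dimensional shadow is concave with strictly increasing second coordinate on its ascending portion, forcing at most $m$ edges before reaching $\pi(F)$, and a shadow-pivot rule in the sense of Section~\ref{subsec:shadrule} should lift this bound to an admissible edge-pivot rule on $P$.
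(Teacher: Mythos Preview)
Your inductive framework and the arithmetic $m+(d-2)m+1=(d-1)m+1$ are fine, and you correctly identify the obstacle: at an arbitrary vertex $v_0\notin F$ there need not exist a neighbour that improves $\mathbf{c}$ and $a$ simultaneously. Indeed, although $v^*-v_0$ lies in the tangent cone $T_P(v_0)$ and has positive $\mathbf{c}$- and $a$-components, the projected cone $\pi\bigl(T_P(v_0)\bigr)\subseteq\R^2$ under $\pi(x)=(\mathbf{c}^{\T}x,a^{\T}x)$ can contain the open positive quadrant while every generating edge projects outside it (e.g.\ two edges projecting to $(1,-\varepsilon)$ and $(-\varepsilon,1)$). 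Your proposed rescue via a shadow rule does not close this gap: Corollary~\ref{cor:lengthbound} and the shadow edge-pivot rule of Section~\ref{subsec:shadrule} require the starting vertex to be the $\mathbf{c}$-maximum of the $\mathbf{d}$-minimal face. With $\mathbf{d}=a$ a facet normal at $v^*$, the $a$-minimal face of $P$ has no reason to contain $v_0$, and $\pi(v_0)$ need not even lie on the boundary of the shadow $\pi(P)$, so there is no ``upper arc'' through $\pi(v_0)$ to follow.

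The missing idea---and what the paper does---is to reverse the roles of $v_0$ and $v^*$: pick the facet $F$ (with outward normal $a$) through the \emph{starting} vertex $v_0$ rather than through the optimum. By induction inside $F$ you reach the $\mathbf{c}$-maximum $w$ of $F$ in at most $(d-2)m+1$ steps. Now $F$ is precisely the $(-a)$-minimal face of $P$, so $w$ is exactly the starting point required by Corollary~\ref{cor:lengthbound}; the $\mathbf{c}$-coherent $(-a)$-monotone path then takes at most $|a^{\T}V(P)|-1\le m$ further steps to the $\mathbf{c}$-maximum of $P$. The paper unrolls this induction into a flag $F_d=\{v_0\}\subset F_{d-1}\subset\cdots\subset F_0=P$ of faces determined by $d$ linearly independent facet normals tight at $v_0$, and concatenates the corresponding coherent paths; the ``$+1$'' comes from the one-dimensional face $F_{d-1}$. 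The point is that the coherent-path machinery only controls where the path \emph{ends} (at a $\mathbf{c}$-maximum), so you must arrange each sub-path to start at a vertex you already control---and that forces the facet choice at $v_0$, not at $v^*$.
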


Note that any polytope is $(m+1)$-level for some sufficiently large $m$. To extend these bounds to lattice polytopes in general, it is desirable to bound the minimal $m$ for which they are $(m+1)$-level. Note that any $(0,k)$-lattice polytope $P = \{\mathbf{x}: A\mathbf{x}\leq \mathbf{b}\} \subseteq \R^{n}$ is $nk ||A||_{\infty}+1$-level, since the number of different values an integral objective function $\mathbf{c}^{\T} \mathbf{x}$ may take on is at most $||\mathbf{c}||_{\infty} n k$ and each facet direction is given by a row of $A$. As a corollary, we have the following bound on monotone diameters. 

\begin{cor} \label{cor:monodiambds}
The monotone diameter of a $d$-dimensional lattice polytope $P = \{\mathbf{x}: A\mathbf{x} \leq \mathbf{b}\} \subseteq \R^{n}$ is at most $(d-1)||A||_{\infty}nk + 1$, when the constraint matrix $A$ is integral. 
\end{cor}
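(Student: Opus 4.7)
The plan is to reduce the statement directly to Theorem \ref{mainthm:klevelb} by verifying the level count asserted in the paragraph preceding the corollary. So I would first fix a row $\mathbf{a}^i$ of $A$ and argue that the functional $\mathbf{x} \mapsto (\mathbf{a}^i)^{\T}\mathbf{x}$ takes at most $nk\|A\|_{\infty}+1$ distinct values on the vertex set $V(P)$. Since $P$ is a lattice polytope contained in $[0,k]^{n}$ and $A$ is integral, every vertex $\mathbf{v}$ has integer coordinates and so $(\mathbf{a}^i)^{\T}\mathbf{v} \in \mathbb{Z}$. For any $\mathbf{x} \in [0,k]^{n}$ one has
\[
(\mathbf{a}^{i})^{\T}\mathbf{x} \in \Bigl[k\!\!\sum_{j:\,a^i_j<0}\!\!a^i_j,\; k\!\!\sum_{j:\,a^i_j>0}\!\!a^i_j\Bigr],
\]
an interval whose length is at most $k\|\mathbf{a}^{i}\|_{1} \le kn\|A\|_{\infty}$. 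Thus the number of integer values available to $(\mathbf{a}^i)^{\T}\mathbf{v}$ is at most $nk\|A\|_{\infty}+1$, and ranging over all rows $\mathbf{a}^i$ shows $P$ is $(m+1)$-level with $m = nk\|A\|_{\infty}$.

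Having established that $P$ is $(m+1)$-level for $m = nk\|A\|_{\infty}$, I would then simply invoke Theorem \ref{mainthm:klevelb}, which gives monotone diameter at most $(d-1)m+1 = (d-1)nk\|A\|_{\infty}+1$, exactly the claimed bound.

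There is really no obstacle here: the only substantive input is Theorem \ref{mainthm:klevelb}, and the corollary is essentially a restatement of its conclusion once one identifies the correct level parameter for a lattice polytope with integral constraint matrix. The only mild care needed is in the level count itself, where one has to use both integrality of the vertices (to restrict to integer values of the functional) and the box containment $P \subseteq [0,k]^{n}$ (to bound the range of the functional).
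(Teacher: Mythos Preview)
Your proposal is correct and follows exactly the approach in the paper: verify that a $(0,k)$-lattice polytope with integral constraint matrix $A$ is $(nk\|A\|_{\infty}+1)$-level by bounding the integer range of each facet functional, then apply Theorem~\ref{mainthm:klevelb}. Your interval estimate is in fact slightly more careful than the paper's one-line justification, but the argument is the same.
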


 One parameter used for bounding the diameter of a polytope is the largest absolute value of a subdeterminant of the constraint matrix $A$ denoted by $\Delta$, when the constraint matrix is integral (see \cite{Bonifas, DiscCurv}). In particular, observe that $||A||_{\infty} \leq \Delta$, and $\Delta$ can grow exponentially in $||A||_{\infty}$ in the worst-case. The state of the art bound for monotone diameters in this case is $O(d^{3} \Delta^{2} \ln(d \Delta))$ \cite{DiscCurv}. Allowing for the additional $||A||_{\infty}$ parameter lets us prove a nice bound for a shadow pivot rule for the Simplex method we introduce in Section \ref{subsec:shadrule} called the \emph{lattice shadow pivot rule}. Note that this is a stronger result than from Corollary \ref{cor:monodiambds}. One can actually implement the Simplex method with this pivot rule and use it to solve LPs.

\begin{theorem}\label{thm:actualpivrule}
Let $P = \{\mathbf{x}: A\mathbf{x} \leq \mathbf{b}\} \subseteq \R^{n}$ be a $d$-dimensional $(0,k)$-lattice polytope with $A$ and $\mathbf{b}$ integral. Then the Simplex method with the lattice shadow pivot rule takes at most $dnk||A||_{\infty}$ non-degenerate steps to solve any LP on $P$. In particular, the monotone diameter of a $(0,k)$-lattice polytope is at most $d^{2}k||A||_{\infty}$.
\end{theorem}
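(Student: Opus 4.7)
The plan is to analyze the \emph{lattice shadow pivot rule} from Section~\ref{subsec:shadrule} by splitting its trajectory into \emph{phases} and bounding both the number of phases and the number of non-degenerate steps per phase. Once the Simplex bound of $dnk\|A\|_{\infty}$ is established, the monotone diameter bound will follow by restricting to the $d$-dimensional affine hull of $P$.

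First I would unpack the rule. At a current non-optimal vertex $v$, it chooses a row $a_i$ of $A$ corresponding to a facet constraint loose at $v$ and runs the classical shadow vertex algorithm with primary objective $c$ and secondary objective $a_i$. Following edges whose image in the plane $\pi(x) = (c^{\T} x,\, a_i^{\T} x)$ lies on the monotone arc of $\pi(P)$, the phase ends when either $c$ is optimized on the current face or the hyperplane $\{a_i^{\T} x = b_i\}$ becomes tight; in the latter case, the rule recurses on the facet $P \cap \{a_i^{\T} x = b_i\}$, which has dimension $d-1$. Since each phase that does not reach the global optimum drops the working dimension by at least one, there are at most $d$ phases.

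Next I would bound a single phase. The non-degenerate steps within a phase correspond to distinct vertices on the monotone arc of $\pi(P)$ traced out from the phase's starting vertex. Since vertices of $P$ are integral with entries in $[0,k]$, each value $a_i^{\T} v$ is an integer in an interval of length at most $k\|a_i\|_1 \leq nk\|A\|_{\infty}$. As $a_i^{\T} v$ must strictly change along the shadow arc between distinct shadow vertices, the phase visits at most $nk\|A\|_{\infty}+1$ vertices and contributes at most $nk\|A\|_{\infty}$ non-degenerate steps. Multiplying by the number of phases yields the claimed $dnk\|A\|_{\infty}$ bound.

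For the monotone diameter bound, I would observe that the monotone diameter depends only on $P$ up to affine isomorphism, so we may re-embed $P$ inside its $d$-dimensional affine hull via an integral unimodular map; the resulting polytope is a $d$-dimensional $(0,k)$-lattice polytope in $\R^{d}$ with constraint matrix $A'$ satisfying $\|A'\|_{\infty} \leq \|A\|_{\infty}$ for an appropriate basis, and the Simplex bound then yields $d^{2} k \|A\|_{\infty}$. The main obstacle is the single-phase bound: one must verify carefully that vertices of $P$ visited within a phase project injectively onto vertices of the shadow arc, i.e.\ that pivots revisiting a shadow vertex are degenerate, and this hinges on the precise tie-breaking built into the rule in Section~\ref{subsec:shadrule}. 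The re-embedding step for the monotone diameter also requires some care, since preserving both $k$ and $\|A\|_{\infty}$ under a unimodular change of variables demands a basis of $\mathrm{aff}(P) \cap \Z^{n}$ adapted to the facet normals of $P$.
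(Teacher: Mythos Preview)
Your proposal rests on a misreading of the lattice shadow pivot rule. In the paper, the rule does \emph{not} proceed in phases, does not pick a single loose facet normal at each stage, and does not recurse onto facets. Instead, at the initial vertex $\mathbf{x}^{0}$ one fixes, once and for all, a maximal linearly independent set $A_{B}$ of facet normals \emph{tight} at $\mathbf{x}^{0}$ and sets $\mathbf{d}=-\sum_{\mathbf{a}\in A_{B}}\mathbf{a}$; the entire run is a single $\mathbf{c}$-coherent $\mathbf{d}$-monotone path. The proof is then a one-line application of Corollary~\ref{cor:lengthbound}: since $|A_{B}|=d$, one has $\|\mathbf{d}\|_{\infty}\le d\|A\|_{\infty}$, and because $\mathbf{d}$ is integral and vertices lie in $\{0,\dots,k\}^{n}$, the functional $\mathbf{d}^{\T}\mathbf{x}$ takes at most $nk\|\mathbf{d}\|_{\infty}+1\le dnk\|A\|_{\infty}+1$ values on $V(P)$, so the coherent path has length at most $dnk\|A\|_{\infty}$. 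No phase decomposition, no induction on dimension, is needed.

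The scheme you describe---choose one facet direction, follow a shadow until that facet becomes tight, then recurse on the facet---is essentially the mechanism behind Theorem~\ref{mainthm:klevelb} and Corollary~\ref{cor:monodiambds}, not Theorem~\ref{thm:actualpivrule}. Even for that argument your setup is off: to start a $\mathbf{c}$-coherent $\mathbf{a}_{i}$-monotone path at $v$, you need $v$ to be the $\mathbf{c}$-maximum of the $\mathbf{a}_{i}$-\emph{minimal} face, which forces you to pick a constraint \emph{tight} at $v$ (and walk in the $-\mathbf{a}_{i}$ direction), not a loose one. With that correction your phase bound would indeed give roughly $dnk\|A\|_{\infty}$, but it proves the Corollary~\ref{cor:monodiambds} statement rather than analyzing the specific single-shadow rule the theorem is about. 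Finally, your worry about the re-embedding for the ``in particular'' clause is reasonable, but the paper simply reads $n=d$ in the full-dimensional case; it does not attempt a unimodular reduction preserving $\|A\|_{\infty}$.
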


 Our bound may be viewed as an improvement upon that found in \cite{DiscCurv} in two senses. First, for $k \ll d$, our bound is stronger as it is fixed parameter quadratic in $d$ and $||A||_{\infty} \leq \Delta$. Secondly, the bound in \cite{DiscCurv} is only in expectation. Thus, there is no guarantee that a path followed by their rule will actually be of the desired length, while our bound is deterministic. Furthermore, the bounds from Corollary \ref{cor:monodiambds} and Theorem \ref{thm:actualpivrule} are essentially the same, but they involve two completely distinct ways of finding a short monotone path. Both strategies may be optimized further to provide two different routes to better bounds. As a final point of interest, all of our bounds and those from \cite{DiscCurv} are found using a shadow pivot rule. We leave open whether the monotone diameters of lattice polytopes may still be bounded by $dk$. However, in pursuit of this question, we found that for certain starting vertices, we may match the $dk$ bound.

\begin{theorem} \label{mainthm:startpt}
Let $P \subseteq \R^{n}$ be a $(0,k)$-lattice polytope. Let $\sigma: [n] \to \pm[n]$ be any signed permutation, and let 
\[\mathbf{x}_{\sigma} = (\text{\normalfont sign}(\sigma(1))\alpha^{|\sigma(1)|}, \text{\normalfont sign}(\sigma(2))\alpha^{|\sigma(2)|}, \dots, \text{\normalfont sign}(\sigma(n))\alpha^{|\sigma(n)|})\]
for $\alpha \geq 2k+1$. Let $\mathbf{v}$ be an $\mathbf{x}_{\sigma}$-maximal vertex. Then the length of the shortest $\mathbf{c}$-monotone path from $\mathbf{v}$ to a $\mathbf{c}$-maximum is at most $dk$ for any choice of $\mathbf{c} \in \R^{n}$.
\end{theorem}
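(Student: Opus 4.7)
The plan is to exhibit a $\mathbf{c}$-monotone path from $\mathbf{v}$ to the $\mathbf{c}$-maximum $\mathbf{u}^{*}$ of length at most $dk$ by running the shadow pivot rule on the parametric objective $\lambda\mathbf{x}_{\sigma}+(1-\lambda)\mathbf{c}$ as $\lambda$ decreases from $1$ to $0$, and to bound its length by induction on $d$. First I would apply the coordinate reflections $y_i\mapsto k-y_i$ for those $i$ with $\sigma(i)<0$, which preserves the $(0,k)$-lattice structure of $P$, so as to assume $\sigma$ is positive. Then $\mathbf{x}_{\sigma}$ realizes the standard lexicographic order with priority list $i_1,i_2,\ldots,i_n$ in decreasing order of $\sigma(\cdot)$, and $\mathbf{v}$ is the lex-maximum vertex. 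The resulting shadow path $\gamma=(\mathbf{v}_0=\mathbf{v},\mathbf{v}_1,\ldots,\mathbf{v}_t=\mathbf{u}^{*})$ is $\mathbf{c}$-monotone by construction.

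The central structural lemma is that the top-priority coordinate $y_{i_1}$ is non-increasing along $\gamma$. Indeed, every edge direction $\mathbf{e}_j:=\mathbf{v}_j-\mathbf{v}_{j-1}$ satisfies $\mathbf{x}_{\sigma}^{\T}\mathbf{e}_j<0$, and the hypothesis $\alpha\geq 2k+1$ yields $\alpha^{n}>2k(\alpha^{n-1}+\cdots+\alpha)$ via the geometric-series bound, which forces the lex-leading nonzero coordinate of $\mathbf{e}_j$ to be strictly negative; in particular $e_{j,i_1}\leq 0$. Iterating the same argument within each plateau of $y_{i_1}$ shows that $y_{i_2}$ is non-increasing there, and so on, giving a nested priority-respecting monotonicity of all coordinates.

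I would then induct on $d$. The cases $d\leq 1$ are immediate. For $d\geq 2$, if $y_{i_1}$ is constant on $P$ one drops coordinate $i_1$ and iterates with the same $d$ and $k$ but one fewer ambient coordinate. Otherwise $F:=\{y\in P:y_{i_1}=v_{i_1}\}$ is a face of dimension $d-1$ containing $\mathbf{v}$, and by the key lemma $\gamma$ splits into an initial segment in $F$ (while $y_{i_1}=v_{i_1}$) followed by a tail outside $F$ (where $y_{i_1}<v_{i_1}$). The initial segment is itself a shadow sub-path in $F$ beginning at the lex-maximum of $F$ under the restricted lex order, so by the inductive hypothesis its length is at most $(d-1)k$.

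The main obstacle is bounding the out-$F$ tail by $k$. The cleanest form of the argument would assert that after $\gamma$ leaves $F$, every subsequent edge strictly decreases $y_{i_1}$, giving at most $v_{i_1}\leq k$ tail edges and therefore the total $(d-1)k+k=dk$. Whether this direct version holds for shadow pivot reduces to comparing the transition ratios $\lambda$ of plateau versus drop edges at out-$F$ vertices; if plateau edges can be selected at sublevels $y_{i_1}<v_{i_1}$, their contributions must be amortized using the nested monotonicity of $y_{i_2},y_{i_3},\ldots$ within each sublevel, or by replacing the shadow path with a different $\mathbf{c}$-monotone path modeled on the Kleinschmidt--Onn diameter construction whose monotonicity is guaranteed by the lex-maximality of $\mathbf{v}$. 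Resolving this out-$F$ bound is the hardest step; the target $dk$ matches the Kleinschmidt--Onn diameter bound, which suggests that from the lex-max the shortest monotone distance coincides with the graph diameter.
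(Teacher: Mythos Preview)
Your inductive treatment of the initial in-$F$ segment is sound: while the single $\mathbf{x}_\sigma$-shadow path on $P$ remains in $F$, it agrees step by step with the $\mathbf{x}_\sigma$-shadow path on $F$ (any neighbor in $P\setminus F$ with a larger slope ratio would trigger an immediate exit), so that segment is a prefix of the $F$-shadow and has length at most $(d-1)k$ by induction. The genuine gap is exactly where you place it: the out-$F$ tail. Once $y_{i_1}$ has dropped below $v_{i_1}$, nothing prevents the shadow rule from selecting plateau edges with $e_{j,i_1}=0$, and amortizing those via the nested monotonicity of $y_{i_2},y_{i_3},\ldots$ does not yield $k$; at each sublevel you are back to a $(d-1)$-dimensional problem of the same shape, and the recursion does not close. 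Your fallback of abandoning the shadow for a Kleinschmidt--Onn style walk is not a proof either, since you would then need to argue that walk is $\mathbf{c}$-monotone from the lex-maximum, which is essentially the content of the theorem.

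The paper avoids the tail problem by not using a single $\mathbf{x}_\sigma$-shadow at all. Instead it concatenates $n$ coherent paths with \emph{changing} auxiliary directions, one coordinate at a time. From the flag $G_n\supseteq\cdots\supseteq G_0=\{\mathbf{v}\}$ in which $G_{i-1}$ is the $\mathbf{e}_{\sigma^{-1}(i)}$-maximal face of $G_i$, it runs the $\mathbf{c}$-coherent $(-\mathbf{e}_{\sigma^{-1}(i)})$-monotone path inside $G_i$ from the already-found $\mathbf{c}$-maximum of $G_{i-1}$ to the $\mathbf{c}$-maximum of $G_i$, for $i=1,\ldots,n$. Each piece has length at most $k$ because a single coordinate takes at most $k+1$ integer values on a $(0,k)$-lattice polytope (this is an immediate application of the coherent-path length bound), and at most $d$ of the $G_i$ are distinct, giving $dk$ in total. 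Every piece is $\mathbf{c}$-monotone, so the concatenation is as well. The essential difference from your plan is that using a single-coordinate auxiliary at each stage makes the per-stage bound of $k$ automatic; your single $\mathbf{x}_\sigma$-shadow entangles all coordinates simultaneously, which is precisely why the tail resists control.
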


Alongside our results for monotone diameters, we expand upon an approach of Del Pia and Michini in \cite{shortsimppaths} for solving LPs on lattice polytopes. The key observation in their paper is that we may find an algorithm for solving a LP by following short but not necessarily monotone paths on the polytope. Using this perspective, they constructed an algorithm for solving LPs on $(0,k)$-lattice polytopes that traces a path of length $O(d^{4}k\log(dk))$, which runs in strongly polynomial time with the additional parameter $k$. Another interpretation of their result is as a constructive diameter bound, where we have an efficient way to find and follow the path. We improve upon their result with constructive diameter bounds that match the best known diameter bounds in the literature of $O(dk)$ due to Kleinschmidt and Onn in \cite{origlattpolydiam}. We also construct an associated algorithm for linear programming in this way.

\begin{theorem}\label{mainthm:0ksimpmeth}
Let $P \subseteq \R^{n}$ be a $(0,k)$-lattice polytope of dimension $d$, and let $\mathbf{c} \in \R^{n}$. Then one may solve the LP $\max(\mathbf{c}^{\T} \mathbf{x}: \mathbf{x} \in P)$ by solving a sequence of $2n$ LPs on faces of $P$ with the Simplex method and prescribed pivot rules such that the total number of non-degenerate steps taken is at most $d(k+\lfloor k/2\rfloor)$. Furthermore, the path followed across all these LPs is a path in the graph of $P,$ so this algorithm yields a constructive diameter bound of $d(k+ \lfloor k/2\rfloor)$.
\end{theorem}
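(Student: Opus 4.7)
The plan is to iterate the face-contraction strategy of Del Pia and Michini using two shadow sub-LPs per coordinate, sharpening the per-iteration step count so that each processed coordinate contributes at most $k + \lfloor k/2 \rfloor$ non-degenerate pivots rather than $2k$. Starting from any initial vertex $\mathbf{v}_{0} \in P$, I would maintain a decreasing chain of faces $P = F_{0} \supseteq F_{1} \supseteq \cdots$ together with a current vertex $\mathbf{v}_{j} \in F_{j}$. At the $j$-th stage, pick a coordinate $i_{j} \in [n]$ that has not yet been ``pinned,'' let $m_{j} \in \{0,1,\dots,k\}$ be the value of $x_{i_{j}}$ at a $\mathbf{c}$-maximum of $F_{j}$, and solve two sub-LPs on $F_{j}$ with shadow pivot rules: the first uses a large perturbation $\mathbf{c} \pm \lambda \mathbf{e}_{i_{j}}$ to drive $x_{i_{j}}$ to $m_{j}$, landing on the sub-face $F_{j} \cap \{x_{i_{j}} = m_{j}\}$; the second re-optimizes $\mathbf{c}^{\T}\mathbf{x}$ on that sub-face, exiting at a vertex $\mathbf{v}_{j+1}$ of a strictly smaller face $F_{j+1}$. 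Cycling through the $n$ coordinates produces the $2n$ sub-LPs in the statement.

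For the step count I would argue as follows. The concatenation of the sub-LP shadow paths is, by construction, a connected walk in the edge graph of $P$, since each non-degenerate pivot inside a sub-LP on a face of $P$ is an honest edge of $P$. At stage $j$, the first sub-LP travels in the two-dimensional shadow on the $(\mathbf{c},\mathbf{e}_{i_{j}})$-plane; a Kleinschmidt--Onn-style analysis inside that shadow gives at most $k$ non-degenerate pivots, one for each integer jump of $x_{i_{j}}$ as it moves within $[0,k]$ to the value $m_{j}$. The second sub-LP runs on the sub-face $F_{j} \cap \{x_{i_{j}} = m_{j}\}$, a lower-dimensional $(0,k)$-lattice polytope in which the direction $\mathbf{e}_{i_{j}}$ has been killed; here I would show the refinement takes at most $\lfloor k/2 \rfloor$ non-degenerate pivots, in the spirit of the half-integral argument leading to Theorem~\ref{mainthm:HalfBound}. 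Finally, only coordinate stages that genuinely reduce the dimension of the current face can contribute non-degenerate pivots, and there are at most $d$ such stages, which multiplies the per-stage bound $k + \lfloor k/2 \rfloor$ to give the total bound $d(k + \lfloor k/2 \rfloor)$.

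The main obstacle is the $\lfloor k/2 \rfloor$ bound for the refinement sub-LP: a crude shadow bound would only yield $k$ and hence a $2dk$ overall bound, so one needs genuine improvement. I expect this improvement to come from the observation that once $x_{i_{j}}$ has been pinned to its optimal value $m_{j}$, the edges of $F_{j} \cap \{x_{i_{j}} = m_{j}\}$ used by the shadow rule must carry $\mathbf{e}_{i_{j}}$-component equal to zero, which forces a parity/sparsity restriction on the integer increments of the remaining coordinates analogous to the $\{0,\tfrac{1}{2},1\}$-alternation exploited in the proof of Theorem~\ref{mainthm:HalfBound}. Aligning the perturbation in the second sub-LP with the next coordinate $i_{j+1}$ so that this restriction translates into a length bound of $\lfloor k/2 \rfloor$ is the step that will require care.

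Once the per-stage bound is established, the remaining work is routine: specifying precisely the perturbed objectives and tie-breaking that realize each sub-LP as a shadow pivot rule for the Simplex method, and verifying that the resulting concatenated path is monotone within each sub-LP and lies entirely in the edge graph of $P$, so that the statement is both an algorithmic guarantee and a constructive diameter bound of $d(k + \lfloor k/2 \rfloor)$.
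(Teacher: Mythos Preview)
Your decomposition is not the one in the paper, and the place where you try to extract the $\lfloor k/2 \rfloor$ savings is the wrong place; the argument you sketch for it does not go through.

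In the paper the two phases are not ``drive $x_{i_j}$ to its $\mathbf{c}$-optimal level $m_j$, then refine''. They are the reverse, and they are decoupled from $\mathbf{c}$ in the first phase. Phase~1 ignores $\mathbf{c}$ entirely: from any starting vertex one repeatedly picks an unpinned coordinate and pushes it to $0$ or to $k$, whichever is \emph{nearer to the current value}, using Bland's rule on the sub-LP $\max(\pm \mathbf{e}_{j}^{\T}\mathbf{x}:\mathbf{x}\in G_i)$. Because one is free to choose the direction, each such sub-LP costs at most $\lfloor k/2\rfloor$ non-degenerate pivots, and only the $d$ stages that actually drop dimension contribute. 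This is exactly the content of Lemma~\ref{lem:GIpath}, and it lands at an $\mathbf{x}_\sigma$-maximal vertex for some signed permutation $\sigma$. Phase~2 then invokes Theorem~\ref{mainthm:startpt}: starting from that special vertex, the sequence of $\mathbf{c}$-coherent $-\mathbf{e}_{\sigma^{-1}(i)}$-monotone shadow paths up the flag $G_0\subseteq G_1\subseteq\cdots\subseteq G_n$ costs at most $dk$ total. The split is therefore $d\lfloor k/2\rfloor + dk$, not $dk + d\lfloor k/2\rfloor$ in your per-coordinate sense.

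The specific gap in your plan is the claimed $\lfloor k/2\rfloor$ bound for the ``refinement'' sub-LP. Pinning $x_{i_j}$ to a fixed integer value does \emph{not} impose any parity or half-integrality restriction on the remaining coordinates; the sub-face is again an arbitrary $(0,k)$-lattice polytope, and a shadow path on it against a coordinate direction can have length up to $k$. The analogy with Theorem~\ref{mainthm:HalfBound} does not apply here: that argument works because in $\{-1,0,1\}^d$ the inner product $\mathbf{u}^\T\mathbf{v}$ is bounded by $|\mathrm{supp}(\mathbf{v})|$, a structural fact with no counterpart for general $k$. There is also a circularity in your first sub-LP: you define $m_j$ as the value of $x_{i_j}$ at a $\mathbf{c}$-maximum of $F_j$, but that is precisely what you are trying to compute; and a large perturbation $\mathbf{c}\pm\lambda\mathbf{e}_{i_j}$ drives $x_{i_j}$ to its extremal value on $F_j$, not to $m_j$. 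The fix is to abandon the attempt to track the $\mathbf{c}$-optimum during the coordinate-pinning phase, push each coordinate to its \emph{nearer} boundary $0$ or $k$ (this is where $\lfloor k/2\rfloor$ genuinely comes from), and only afterwards run the $\mathbf{c}$-shadow paths of Theorem~\ref{mainthm:startpt} from the resulting $\mathbf{x}_\sigma$-maximal vertex.
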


Note that the path traced in Theorem \ref{mainthm:0ksimpmeth} is a combination of precisely two monotone paths. Namely, the first one maximizes $\mathbf{x}_{\sigma}^{\T}$ for some choice of signed permutation $\sigma$ to reach an $\mathbf{x}_{\sigma}$-maximal vertex in at most $d\lfloor k/2 \rfloor$ steps. From such a vertex, one follows a $\mathbf{c}$-monotone path to solve the original LP. By Theorem \ref{mainthm:startpt}, we may guarantee that this second monotone path is of length at most $dk$ to arrive at a bound of $d(k + \lfloor k/2\rfloor)$.

\subsection{Prior Work}
\label{subsec:prior}

Our primary tool for all of our results is the shadow-vertex pivot rule introduced by Borgwardt in \cite{borgwardt} for the probabilistic analysis of the Simplex method. To start, Borgwardt showed that the expected diameter of a random polytope from a suitable distribution is polynomially bounded with the bounds very recently strengthened in \cite{RandPolyDiam} to $\Theta(n^{2}m^{\frac{1}{n-1}})$, where $n$ is the number of variables and $m$ is the number of inequalities. In other groundbreaking work, the smoothed analysis of algorithms was initiated by Spielman and Teng for studying the run-time of Simplex method in \cite{SpielmanTeng} and also improved upon in \cite{vershynin, smoothedanalysis}. The main result from the smoothed analysis is that the shadow-vertex pivot rule is polynomial in expectation for the Simplex method under perturbation. Namely, one is allowed to perturb the initial data, and the resulting run-time is $O(n^{2}\sqrt{\log(m)}/\sigma^{2})$ with respect to the number of inequalities $m$, number of variables $n$, and the standard deviation of the Gaussian $\sigma \leq \frac{1}{\sqrt{n} \log(m)}$. For a fixed polytope, the shadow-vertex pivot rule for a random shadow still yields bounds that are polynomial assuming additional parameters as mentioned in the introduction such as maximal subdeterminants or more generally discrete curvature parameters discussed in \cite{DiscCurv}. Bounds of this type originated with work of Dyer and Frieze in \cite{dyerfrieze} for diameters of totally unimodular polytopes that were extended to the maximal subdeterminant case and improved upon substantially in \cite{Bonifas}.

A key distinction between our approach to the shadow-vertex rule and all those mentioned thus far is that ours is purely deterministic. There is no mention of probability, and this is key to each of our arguments. The point in each case is to exploit the existence of certain small shadows of these polytopes that we can find explicitly to follow short paths. This perspective also appeared in our previous work for finding optimal pivot rules for the Simplex method for $0/1$-polytopes \cite{01simplex}.

In general, there are gaps in our knowledge between bounds for non-degenerate steps taken by the Simplex method, monotone diameter bounds, and diameter bounds. For example, network flow polytopes satisfy a linear diameter bound as shown in \cite{netflowHirsch}, and they satisfy a quadratic monotone diameter bound using a pivot rule found in \cite{orlin}. However, the existence of a pivot rule or even a monotone diameter bound that matches the linear diameter bound remains open. 

For $(0,k)$-lattice polytopes, the gap is even larger. In that case, Kleinschmidt and Onn found a diameter bound of $dk$ in \cite{origlattpolydiam} that was later improved upon by Del Pia and Michini to $\lfloor (k-\frac{1}{2})d \rfloor$ in \cite{dplattdiam}. Then these bounds were further improved by Deza and Pournin in \cite{implattdiam} to $kd - \lceil\frac{2d}{3} \rceil - (k-3)$. However, the upper bounds do not match the lower bounds outside of a select few choices of $d$ and $k$. Namely, the best known lower bounds are of the form $\Omega(k^{\frac{d}{d+1}})$ for all $k, d \in \mathbb{N}$ from the class of primitive lattice zonotopes found in \cite{LattZonoDiam, PrimZono}. Furthermore, Del Pia and Michini built on their previous work to find an explicit method to build a polynomial length path in \cite{shortsimppaths} finding a bound of at most $O(d^{4}k\log(dk))$. However, their path is not found by any pivot rule for the Simplex method. Paths followed by the Simplex method must be monotone, and their paths need not be monotone. We argue that a monotone diameter bound is still desirable and emphasize the following conjecture:

\begin{conj} \label{conj:monodiam}
The monotone diameter of a $(0,k)$-lattice polytope is bounded by a polynomial in $d$ and $k$.
\end{conj}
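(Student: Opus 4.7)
Since Conjecture \ref{conj:monodiam} is stated as an open problem, I can only outline a plausible line of attack rather than a complete argument. The strategy I would pursue builds directly on Theorem \ref{mainthm:startpt}: from any vertex $\mathbf{v}$ and any objective $\mathbf{c}$, try to produce a $\mathbf{c}$-monotone path from $\mathbf{v}$ to some $\mathbf{x}_\sigma$-maximal vertex $\mathbf{w}$, where the signed permutation $\sigma$ is chosen as a function of $\mathbf{c}$ (for instance, picking the signs of $\sigma$ to agree with those of $\mathbf{c}$ and ordering coordinates by $|c_i|$, so that $\mathbf{x}_\sigma$ is roughly aligned with $\mathbf{c}$). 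Once at such a $\mathbf{w}$, Theorem \ref{mainthm:startpt} supplies a $\mathbf{c}$-monotone continuation of length at most $dk$. The whole problem then reduces to bounding, polynomially in $d$ and $k$, the number of non-degenerate pivots in the first segment.

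The natural vehicle for that first segment is a shadow pivot rule. Fix the two-dimensional projection spanned by $\mathbf{c}$ and $\mathbf{x}_\sigma$, and consider the $\mathbf{c}$-monotone portion of the shadow boundary that terminates at the $\mathbf{c}$-maximum; operationally, from $\mathbf{v}$ run the Simplex method on the shadow with target $\mathbf{x}_\sigma$ to reach the $\mathbf{x}_\sigma$-maximal vertex of the shadow, then reverse direction and follow $\mathbf{c}$ to the optimum. The length of such a path is controlled by the number of vertices on the shadow. If one could show that for every $(0,k)$-lattice polytope $P$ and every $\mathbf{c}$ there exists some $\sigma$ for which the $(\mathbf{c}, \mathbf{x}_\sigma)$-shadow of $P$ has $\mathrm{poly}(d,k)$ vertices, the conjecture would follow immediately by combining this with the $dk$ bound of Theorem \ref{mainthm:startpt}.

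The main obstacle is precisely that shadow-size estimate. Every bound in the paper picks up an extra factor---$||A||_\infty$ in Theorem \ref{thm:actualpivrule}, the sparsity parameter $n-s$ in Proposition \ref{mainprop:Sparsebound}, or the level count $m$ in Theorem \ref{mainthm:klevelb}---because the standard route to bounding shadow size counts distinct values of the auxiliary functional on vertices, and for a generic $\mathbf{x}_\sigma$ that count is not intrinsically controlled by $d$ and $k$. Any honest attack must break this barrier, perhaps by exhibiting a specially tailored auxiliary direction (not of the form $\mathbf{x}_\sigma$ at all) whose level count on $P$ is $\mathrm{poly}(d,k)$, so that Theorem \ref{mainthm:klevelb} can then be invoked after a favorable change of basis. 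An alternative, more combinatorial route is to mimic the inductive diameter proofs of \cite{origlattpolydiam, dplattdiam, implattdiam}: show that from any vertex there is a $\mathbf{c}$-monotone burst of $O(d)$ pivots after which $P$ may be replaced by a face of strictly smaller maximum coordinate, and recurse. The decisive difficulty, and the reason those diameter proofs do not already settle the conjecture, is that the standard inductive step moves freely along non-monotone edges, and enforcing $\mathbf{c}$-monotonicity at every pivot without losing the linear-per-level count appears to require a genuinely new idea.
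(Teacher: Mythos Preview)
The statement is a conjecture, and the paper offers no proof; it is posed precisely as the open problem that the partial results are meant to illuminate. You correctly acknowledge this and sketch research directions rather than a proof, which is the appropriate response.

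That said, your first line of attack has a structural gap beyond the shadow-size obstacle you flag. You propose to reach an $\mathbf{x}_\sigma$-maximal vertex $\mathbf{w}$ from an arbitrary starting vertex $\mathbf{v}$ by a $\mathbf{c}$-monotone path, and then invoke Theorem~\ref{mainthm:startpt}. But unless $\mathbf{w}$ is itself the $\mathbf{c}$-maximum, there exist vertices $\mathbf{v}$ with $\mathbf{c}^{\T}\mathbf{v} > \mathbf{c}^{\T}\mathbf{w}$, and from any such $\mathbf{v}$ no $\mathbf{c}$-monotone path to $\mathbf{w}$ exists at all; aligning the signs and ordering of $\sigma$ with $\mathbf{c}$ does not force $\mathbf{w}$ to coincide with the $\mathbf{c}$-optimum. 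Separately, the shadow mechanism you invoke requires the starting vertex to lie on the boundary of the $(\mathbf{c},\mathbf{x}_\sigma)$-shadow---equivalently, to be optimal for some direction in $\mathrm{span}(\mathbf{c},\mathbf{x}_\sigma)$---and an arbitrary $\mathbf{v}$ need not satisfy this. The paper's Theorem~\ref{mainthm:0ksimpmeth} sidesteps exactly this difficulty by letting the first leg be $\mathbf{x}_\sigma$-monotone rather than $\mathbf{c}$-monotone, which is why it delivers only a constructive diameter bound and not a monotone diameter bound. Your second, inductive route and your diagnosis of why the Kleinschmidt--Onn and Del~Pia--Michini arguments do not already settle the conjecture are on target: those inductions rely on non-monotone moves to drop to a lower-$k$ face, and enforcing $\mathbf{c}$-monotonicity at every step is the essential missing idea.
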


Our results show that this conjecture holds in many special cases. The tools we produce here have the potential to be sharpened to prove this conjecture in general and may be applied to bound monotone diameters of other large classes of polytopes.  

\section{Monotone Diameter Bounds}
\label{sec:monodiam}

As in the probabilistic analysis of the Simplex method, our key tool is the shadow-vertex pivot rule. However, we view the shadow-vertex pivot rule from a deterministic perspective. Namely, instead of arguing that a random shadow is expected to be small, we provide an explicit manner to find and follow small shadows. To explain this technique, we rely on the language of \emph{coherent monotone paths} coming from the geometric combinatorics perspective on the shadow-vertex pivot rule that arose from the monotone path polytope construction in \cite{BSFiberPoly}. A coherent monotone path has a corresponding shadow pivot rule as described in Section \ref{subsec:shadrule}.
\begin{defn}
Let $P \subseteq \R^{n}$ be a polytope. Consider a pair of objective functions, $\mathbf{c}^{\T}$ and $\mathbf{d}^{\T}$. Let $F_{0}$ and $F_{1}$ be the $\mathbf{d}$-minimal and $\mathbf{d}$-maximal faces of $P$ respectively. Let $\mathbf{x}^{0}$ and $\mathbf{x}^{\ast}$ be $\mathbf{c}$-maxima of $F_{0}$ and $F_{1}$ respectively. Then a $\mathbf{c}$-coherent $\mathbf{d}$-monotone path on $P$ is a sequences of vertices $[\mathbf{x}^{0}, \mathbf{x}^{1}, \dots, \mathbf{x}^{m} = \mathbf{x}^{\ast}]$ such that
\[\mathbf{x}^{i+1} \in \argmax_{\mathbf{u} \in N_{\mathbf{d}}(\mathbf{x}^{i})} \frac{\mathbf{c}^{\T}(\mathbf{u} - \mathbf{\mathbf{x}^{i}})}{\mathbf{d}^{\T}(\mathbf{u} - \mathbf{\mathbf{x}^{i}})},\]
where $N_{\mathbf{d}}(\mathbf{x}^{i})$ is the set of all $\mathbf{d}$-improving neighbors of $\mathbf{x}^{i}$.
\end{defn}

The following result from \cite{01simplex} for bounding the lengths of coherent monotone paths will be the key tool for proving all of our results in this paper.
\begin{cor}[Corollary 1 in \cite{01simplex}]
\label{cor:lengthbound}
Let $P \subseteq \R^{d}$ be a polytope, and let $\mathbf{c}^{\T}$ and $\mathbf{d}^{\T}$ be objective functions on $P$. Let $\mathbf{v}$ denote the set of vertices of $P$, and let $F$ denote the face that minimizes $\mathbf{d}^{\T}$. Then the $\mathbf{c}$-coherent $\mathbf{d}$-monotone path of length takes at most $|\mathbf{d}^{\T}(V)|-1$ steps to walk from the $\mathbf{c}$-maximum of $F$ to the $\mathbf{c}$-maximum of $P$. 
\end{cor}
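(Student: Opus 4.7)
The plan is to reduce the claim to a pigeonhole argument based purely on the strict $\mathbf{d}$-monotonicity of the path, setting aside the coherent selection rule entirely for the length count.

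First, I would unfold the definition of a $\mathbf{c}$-coherent $\mathbf{d}$-monotone path. By construction, each $\mathbf{x}^{i+1}$ is drawn from $N_{\mathbf{d}}(\mathbf{x}^{i})$, the set of $\mathbf{d}$-improving neighbors of $\mathbf{x}^{i}$; in particular $\mathbf{d}^{\T}(\mathbf{x}^{i+1} - \mathbf{x}^{i}) > 0$. Iterating this inequality across $i = 0, 1, \dots, m-1$ yields the strict chain
\[\mathbf{d}^{\T} \mathbf{x}^{0} < \mathbf{d}^{\T} \mathbf{x}^{1} < \cdots < \mathbf{d}^{\T} \mathbf{x}^{m}.\]

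Second, each $\mathbf{x}^{i}$ is a vertex of $P$, so the $m+1$ scalars $\mathbf{d}^{\T} \mathbf{x}^{0}, \dots, \mathbf{d}^{\T} \mathbf{x}^{m}$ are pairwise distinct elements of the finite set $\mathbf{d}^{\T}(V)$. Pigeonhole then gives $m+1 \leq |\mathbf{d}^{\T}(V)|$, so the path has at most $|\mathbf{d}^{\T}(V)| - 1$ steps, which is exactly the asserted bound. The only thing the coherent endpoint convention contributes is that the path starts at the $\mathbf{c}$-maximum of $F$ and terminates at the $\mathbf{c}$-maximum of the $\mathbf{d}$-maximal face; which particular $\mathbf{c}$-coherent successor is selected at each step is irrelevant for this counting.

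The main \textquotedblleft obstacle\textquotedblright{} here is essentially illusory: the argument is a one-line pigeonhole, and the coherent selection rule (the $\mathbf{c}$-maximizing ratio in the definition) plays no role in bounding the length. The real content of Corollary \ref{cor:lengthbound} lies not in its proof but in how it gets applied: one exploits the freedom to choose $\mathbf{d}$ so that $|\mathbf{d}^{\T}(V)|$ is provably small -- for instance taking $\mathbf{d}$ to be a facet direction on an $(m+1)$-level polytope (as in Theorem \ref{mainthm:klevelb}) or a large-$\alpha$ signed-permutation objective $\mathbf{x}_{\sigma}$ (as in Theorem \ref{mainthm:startpt}) -- and then lets the coherent rule execute this short path as a genuine pivot rule for the Simplex method.
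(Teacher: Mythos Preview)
Your argument is correct: strict $\mathbf{d}$-monotonicity forces the values $\mathbf{d}^{\T}\mathbf{x}^{0},\dots,\mathbf{d}^{\T}\mathbf{x}^{m}$ to be pairwise distinct elements of $\mathbf{d}^{\T}(V)$, so $m \leq |\mathbf{d}^{\T}(V)|-1$. Note that the paper does not supply its own proof of this corollary---it is quoted verbatim as Corollary~1 of \cite{01simplex}---so there is nothing to compare against here; the pigeonhole argument you give is exactly the standard one and is what underlies the cited result.
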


Without any additional machinery, we may immediately bound the monotone diameters of half-integral polytopes.

\begin{proof}[Proof of Theorem \ref{mainthm:HalfBound} and Proposition \ref{mainprop:Sparsebound}]
Suppose without loss of generality that $P$ is full dimensional. Furthermore, for simplicity, we may change coordinates so that the polytope has vertices in $\{-1,0,1\}^{d}$. Let $\mathbf{v}$ be a starting vertex, and let $\mathbf{c}^{\T}$ be the objective function. For each subset $S \subseteq [d]$, let $\mathbf{u}_{S} = \sum_{i \in S} \mathbf{u}_{i} \mathbf{e}_{i}$ for $\mathbf{u} \in \R^{d}$. Then we will follow a path via the following algorithm:
\begin{enumerate}
    \item Set $F = P$, $S = \emptyset$, and $i = 0$.
    \item If $\mathbf{v}_{[n] \setminus S} = \mathbf{0}_{[n] \setminus S}$, check for a $\mathbf{c}$-improving neighbor in $F$. If $\mathbf{c}$-improving neighbors exist, set $\mathbf{v}^{i}$ equal to one of the improving neighbors, set $\mathbf{v} = \mathbf{v}^{i}$, and move to step $3$. Otherwise, move to step $4$.
    \item Set $S = \text{supp}(\mathbf{v})$. Set $F = \{\mathbf{x} \in P: \mathbf{x}_{j} = \mathbf{v}_{j} \text{ for all } j \in S\}$. Set $i = i+1$ and move to step $2$.
    \item Set $\mathbf{v}$ to be the $\frac{\mathbf{c}^{\T}}{-(\mathbf{v}^{i})^{\T}}$-maximal neighbor among all neighbors of $\mathbf{v}$ that are both $-\mathbf{v}^{i}$-improving and $\mathbf{c}$-improving. Repeat until no such neighbor exists. 
    \item Return $\mathbf{v}$
\end{enumerate}
We claim that this algorithm follows a monotone path from an initial vertex to a maximal vertex and returns that maximal vertex. By construction, when $\mathbf{v}$ changes, it must change to a $\mathbf{c}$-improving neighbor. Hence, the algorithm must follow a monotone path. Thus, it remains to argue that the path is short and ends at the maximum. 

 Observe that for $\mathbf{v} \in \{-1,0,1\}^{d}$, we have $\mathbf{u}^{\T}\mathbf{v} \leq |\text{supp}(\mathbf{u})|$ with equality if and only if $\mathbf{v}_{i} = \mathbf{u}_{i}$ for all $i \in \text{supp}(\mathbf{u})$. By negating $\mathbf{v}$, we attain an equivalent lower bound, $\mathbf{u}^{\T} \mathbf{v} \geq -|\text{supp}(\mathbf{u})|$, again with equality if and only if $\mathbf{v}_{i} = -\mathbf{u}_{i}$ for all $i \in \text{supp}(u)$. In particular, $\mathbf{u}^{\T}(- \mathbf{u}) = - |\text{supp}(\mathbf{u})|$. Hence, if $\mathbf{u}$ is a vertex of $P$, then $\mathbf{u}$ lies on the $(-\mathbf{u})$-minimal face of $P$, and the vertices of that face $F$ are exactly the set of vertices of $P$ that agree with $\mathbf{u}$ on its support. Namely, $F = \{\mathbf{x} \in P: \mathbf{x}_{j} = \mathbf{v}_{j} \text{ for all } j \in \text{supp}(\mathbf{v})\}$. Since $P$ is full dimensional, $F = P$ precisely when $\mathbf{v} = \mathbf{0}$. 
 
 At the first step of the algorithm, it checks whether $\mathbf{v} = \mathbf{0}$. If $\mathbf{v} = \mathbf{0}$, we move to any improving neighbor $\mathbf{v}^{1}$. Otherwise, we set $\mathbf{v} = \mathbf{v}^{1}$. In either case, we observe that $\mathbf{v}^{1}$ is on the face $F_{1} = \{\mathbf{x} \in P: \mathbf{x}_{j} = \mathbf{v}^{1}_{j} \text{ for all } j \in [d]\}$. Then we check if $\mathbf{v}^{1}$ is maximal on that face or not. If not, we choose $\mathbf{v}^{2}$ on $F_{2}$ that is an improving neighbor of $\mathbf{v}^{1}$. Note that $\mathbf{v}^{1}_{\text{supp}(\mathbf{v}^{1})} = \mathbf{v}^{2}_{\text{supp}(\mathbf{v}^{1})}$ on $F_{1}$, so $\text{supp}(\mathbf{v}^{1}) \subsetneq \text{supp}(\mathbf{v}^{2})$. Then the same process continues on $F_{2}$ until we reach some $F_{k}$ such that $\mathbf{v}^{k}$ is $\mathbf{c}$-maximal on $F_{k}$. Such a $F_{k}$ must be reached within at most $d$ steps, since in the worst case, $\text{supp}(\mathbf{v}) \subsetneq \text{supp}(\mathbf{v}^{1}) \subsetneq \text{supp}(\mathbf{v}^{2}) \subsetneq \dots \subsetneq \text{supp}(\mathbf{v}^{k})$ is a complete flag of subsets.  
 
 At $\mathbf{v}^{k}$, $\mathbf{v}^{k}$ is a $\mathbf{c}$-maximum of $F_{k}$, where $F_{k}$ is the face given by minimizing $-(\mathbf{v}^{k})^{\T}$. Hence, $\mathbf{v}^{k}$ is a starting point of a $\mathbf{c}$-coherent $-\mathbf{v}^{k}$-monotone path $\gamma$. Then $\gamma$ is of length at most the number of different values of $-(\mathbf{v}^{k})^{\T}$ takes on. Since $|\mathbf{u}^{\T} \mathbf{v}^{k}| \leq |\text{supp}(\mathbf{v}^{k})|$ for all $\mathbf{u} \in \{-1,0,1\}^{d}$, $|\{-(\mathbf{v}^{k})^{\T} \mathbf{u}: \mathbf{u} \in V(P)\}| \leq 2|\text{supp}(\mathbf{v}^{k})| + 1$. Thus, by Corollary \ref{cor:lengthbound}, the length of $\gamma$ is at most $2|\text{supp}(\mathbf{v}^{k})|$. Note that step $4$ of the algorithm is to follow $\gamma$. Hence, the algorithm follows a path to reach the optimum of the desired type. Furthermore, that path is of length at most $d +2|\text{supp}(\mathbf{v}^{k})| \leq 3d$. Without the full dimensional assumption, this algorithm still yields an edge pivot-rule with a similar bound but $2|\text{supp}(\mathbf{v}^{k})| \leq 2n$ instead of $2d$ yielding a bound of $d + 2n$.

Suppose now that $|\text{supp}(\mathbf{z})| = s$ for all $\mathbf{z} \in V$. Then no two distinct vertices agree on their support meaning that the algorithm moves to step $4$ without ever taking a step. Thus, the total length of the path is at most $2 |\text{supp}(\mathbf{v})| = 2s$ as desired. Furthermore, this path is a $\mathbf{c}$-coherent $-\mathbf{v}^{k}$-monotone path meaning that it is followed by the corresponding shadow pivot rule.
\end{proof}

Similarly, to bound the monotone diameters of $(m+1)$-level polytopes, we may directly apply Corollary \ref{cor:lengthbound}.

\begin{proof}[Proof of Theorem \ref{mainthm:klevelb}]
Let $P = \{\mathbf{x}: A\mathbf{x} \leq \mathbf{b}\}$ be a $(m+1)$-level polytope, let $\mathbf{v}$ be a vertex of $P$, and let $\mathbf{c}$ be some objective function we hope to optimize. Let $\{\mathbf{a}_{1}, \mathbf{a}_{2}, \dots, \mathbf{a}_{d}\}$ be a linearly independent set of facet defining directions tight at $\mathbf{v}$. Then $F_{i} = \{\mathbf{x} \in P: \mathbf{a}_{j}^{\T}\mathbf{x} = \mathbf{b}_{j} \text{ for all } j \leq i\}$ is a face of $P$ for all $i \in [d]$. Furthermore, $F_{0} = P$, $F_{d} = \mathbf{v}$, and $F_{i}$ is the $\mathbf{a}_{i}$-maximal face of $F_{i-1}$ for each $i \in [d]$.

Since $\mathbf{v}$ is the $\mathbf{c}$-maximum of $F_{d}$, there is a $\mathbf{c}$-coherent $-\mathbf{a}_{d}$-monotone path on $F_{d}$ that starts at $\mathbf{v}$ and moves to a $\mathbf{c}$-maximum of $F_{d-1}$. This path takes at most $1$ step, since $F_{d-1}$ is $1$-dimensional. Continuing this for each $i$, we find a $\mathbf{c}$-coherent $-\mathbf{a}_{i}$-monotone path from a $\mathbf{c}$-maximum of $F_{i}$ to a $\mathbf{c}$-maximum of $F_{i-1}$. Since $F_{0} = P$, concatenating these $\mathbf{c}$-monotone paths yields a $\mathbf{c}$-monotone path from $\mathbf{v}$ to the $\mathbf{c}$-maximum of $P$. Since $P$ is a $(m+1)$-level polytope, $|\mathbf{a}_{i}^{\T} V| \leq m+1$, so each of these paths is of length at most $m$ by Corollary \ref{cor:lengthbound}. Hence, the total length of the path is at most $(d-1)m + 1$. 
\end{proof}

A natural question following this bound is whether it can be improved any further. We leave this as an open question:

\begin{op}
What is the worst-case monotone diameter of a $d$-dimensional $(m+1)$-level polytope in terms of $m$ and $d$?
\end{op}

An answer to this question may allows us to improve upon our monotone diameter bounds for lattice polytopes. As expressed in Corollary \ref{cor:lengthbound}, this bound matches the bound for the lattice shadow pivot rule from Theorem \ref{thm:actualpivrule}. However, unlike that bound, we may tighten this bound further for sparse constraint matrices. Namely, the number of different values $\mathbf{c}$ takes on is at most $||\mathbf{c} ||_{\infty}|\supp(\mathbf{c})| k$. Thus, let $s$ be the maximum size of the support of a row of the constraint matrix $A$. Then, by the same reasoning as before, the polytope is at most $s k ||A||_{\infty} + 1$-level. We then arrive at the following tighter bound.

\begin{cor}
Let $P = \{\mathbf{x}: A\mathbf{x} \leq \mathbf{b}\} \subseteq \R^{n}$ be a $d$-dimensional $(0,k)$-lattice polytope, and let $s$ be the maximum number of nonzero coordinates in any row of the constraint matrix. Then the monotone diameter of $P$ is at most $(d-1)||A||_{\infty}sk+1$.  
\end{cor}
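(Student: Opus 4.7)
The plan is to deduce the corollary directly from Theorem \ref{mainthm:klevelb} by sharpening the value-counting argument already sketched in the paragraph preceding the statement. Concretely, the goal is to show that $P$ is $(m+1)$-level with $m = sk||A||_{\infty}$; once that is established, the bound $(d-1)sk||A||_{\infty} + 1$ falls out immediately by plugging into Theorem \ref{mainthm:klevelb}.

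The core estimate is a row-by-row count. Fix any row $\mathbf{a}_i$ of $A$. Since $P$ is a $(0,k)$-lattice polytope and $A$ is integral, every vertex $\mathbf{v} \in V(P)$ lies in $\{0,1,\dots,k\}^n$ and each value $\mathbf{a}_i^{\T} \mathbf{v}$ is an integer. The next step is to bound the range of these integer values by splitting the sum $\mathbf{a}_i^{\T}\mathbf{v} = \sum_{j \in \supp(\mathbf{a}_i)} (\mathbf{a}_i)_j v_j$ according to the signs of the coefficients: the maximum over $\mathbf{v} \in \{0,\dots,k\}^n$ is attained by taking $v_j = k$ on positive entries and $v_j = 0$ on negative ones, while the minimum uses the reverse assignment. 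The resulting spread is at most
\[ k \sum_{j \in \supp(\mathbf{a}_i)} |(\mathbf{a}_i)_j| \;\leq\; sk||A||_{\infty},\]
so the set $\{\mathbf{a}_i^{\T}\mathbf{v} : \mathbf{v} \in V(P)\}$ of integer values has cardinality at most $sk||A||_{\infty}+1$.

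Since this bound holds uniformly over all rows of $A$, and any irredundant subsystem of $A\mathbf{x} \leq \mathbf{b}$ uses a subset of those rows, $P$ satisfies the definition of an $(m+1)$-level polytope with $m = sk||A||_{\infty}$. Applying Theorem \ref{mainthm:klevelb} then yields the monotone diameter bound $(d-1)m+1 = (d-1)sk||A||_{\infty}+1$, completing the argument.

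There is no serious obstacle here: this corollary is a sparsity-aware refinement of Corollary \ref{cor:monodiambds}, and all of the real combinatorial work has been encapsulated in Theorem \ref{mainthm:klevelb}. The only subtlety worth flagging is that the argument simultaneously uses integrality of $A$ (so that $\mathbf{a}_i^{\T}\mathbf{v}$ is an integer, letting us convert a range bound into a cardinality bound) and the row-sparsity hypothesis $|\supp(\mathbf{a}_i)| \leq s$ (which replaces the naive factor of $n$ appearing in Corollary \ref{cor:monodiambds} by the smaller $s$).
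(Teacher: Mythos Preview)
Your proposal is correct and follows exactly the paper's approach: the paragraph immediately preceding the corollary already observes that the number of values $\mathbf{c}^{\T}\mathbf{x}$ can take is at most $||\mathbf{c}||_{\infty}|\supp(\mathbf{c})|k$, whence $P$ is $(sk||A||_{\infty}+1)$-level, and the corollary is obtained by plugging into Theorem~\ref{mainthm:klevelb}. Your write-up merely makes the range-versus-cardinality count explicit (via the sign-splitting argument), which is a welcome clarification but not a different route.
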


In particular, after fixing all other parameters, this bound becomes linear in $d$ while remaining independent of the number of facets. One context in which this bound applies is for the LP duals of network flow polytopes for integral objective functions. Namely, for a network flow polytope, all columns of the constraint matrix have at most two nonzero entries, and each nonzero entry must have absolute value at most $1$. For the LP dual, the constraint matrix remains totally unimodular, so the resulting polytope is still integral. However, the constraint matrix becomes $A^{\T}$ meaning that each row has at most two nonzero entries. Hence, the bound in this case becomes $(d-1)2k+1$. Thus, we asymptotically match the $O(dk)$ bound for diameters in the special case of LP duals of network flow polytopes. 

\subsection{Shadow Pivot Rule Bound} 
\label{subsec:shadrule}

When Borgwardt defined the shadow-vertex pivot rule in \cite{borgwardt}, the method of choosing the shadow to follow was purposefully made random, since such a choice allowed for his analysis. However, in \cite{01simplex}, we observed that having a deterministic choice of a shadow may be beneficial under certain conditions. In order to understand this concept, we first define a shadow edge-pivot rule.

\begin{defn}
A \emph{shadow edge-pivot rule} for the Simplex method is defined as follows. Start with a LP $\max(\mathbf{c}^{\T}x : x \in P)$. Then a shadow edge-pivot rule first comes with a method to choose an auxiliary vector $\mathbf{d} \in \mathbb{R}^{n}$ in terms of an initial vertex $\mathbf{x}^{0}$ and objective function $\mathbf{c}$ such that $\mathbf{x}^{0}$ is the $\mathbf{c}$-maximum of the $\mathbf{d}$-minimal face of $P$. Then the corresponding shadow pivot rule is given by 
\[\mathbf{x}^{i+1} = \argmax_{\mathbf{u} \in N_{\mathbf{d}}(\mathbf{x}^{i})} \frac{\mathbf{c}^{\T}(\mathbf{u} - \mathbf{x}^{i})}{\mathbf{d}^{\T}(\mathbf{u}- \mathbf{x}^{i})},\]
where $N_{\mathbf{d}}(\mathbf{x}^{i})$ is the set of $\mathbf{d}$-improving neighbors of $\mathbf{x}^{i}$.
\end{defn}

Geometrically, this pivot rule corresponds exactly to following the $\mathbf{c}$-coherent $\mathbf{d}$-monotone path on the polytope. In Section $3.4$ of \cite{01simplex}, we showed that given a shadow edge-pivot rule, there is a corresponding shadow pivot rule for the Simplex method such that the non-degenerate steps taken by the shadow pivot rule are exactly the steps followed by the shadow edge-pivot rule. Thus, to prove Theorem \ref{thm:actualpivrule}, it suffices to exhibit a shadow edge-pivot rule that is guaranteed to always take few steps on a lattice polytope. We accomplish this with the following rule:

\begin{defn}
The \emph{lattice shadow edge-pivot rule} for the Simplex method is defined as follows. Start with any LP $\max(\mathbf{c}^{\T}\mathbf{x} : A\mathbf{x} \leq \mathbf{b})$ and an initial vertex $\mathbf{x}^{0}$. Let $A_{B}$ be any maximal linearly independent set of facet directions tight at $\mathbf{x}^{0}$. Define $\mathbf{d} = -\sum_{\mathbf{a} \in A_{B}} \mathbf{a}$. Then the lattice shadow edge-pivot rule chooses an improving neighbor by
\[\mathbf{x}^{i+1} = \argmax_{\mathbf{u} \in N_{\mathbf{d}}(\mathbf{x}^{i})} \frac{\mathbf{c}^{\T}(\mathbf{u} - \mathbf{x}^{i})}{\mathbf{d}^{\T}(\mathbf{u}- \mathbf{x}^{i})},\]
where $N_{\mathbf{d}}(\mathbf{x}^{i})$ is the set of $\mathbf{d}$-improving neighbors of $\mathbf{x}^{i}$.
\end{defn}

This rule is a shadow edge-pivot rule, since $\sum_{\mathbf{a} \in A_{B}} \mathbf{a}$ is in the interior of the normal cone at $\mathbf{x}^{0}$ and therefore maximized uniquely at $\mathbf{x}^{0}$. Thus, all that remains is to show that the paths followed satisfy the bound. 

\begin{proof}[Proof of Theorem \ref{thm:actualpivrule}]
Let $\mathbf{d}$ be the auxiliary vector chosen by the lattice shadow edge-pivot rule. Then $||\mathbf{d}||_{\infty} \leq \sum_{\mathbf{a} \in A_{B}} ||\mathbf{a}||_{\infty} \leq d ||A||_{\infty}$. Note that the number of different values that $\mathbf{d}^{\T}\mathbf{x}$ takes on on $P$ is at most $||\mathbf{d}||_{\infty} nk +1 \leq dnk||A||_{\infty} + 1$. Then, by Corollary \ref{cor:lengthbound}, the length of the shadow path chosen by $\mathbf{d}$ is at most $dnk||A||_{\infty}$.
\end{proof}

It is possible that these bounds may be improved further for a more careful choice of auxiliary objective function $\mathbf{d}$ or a more involved method of bounding the lengths. We leave both as open questions. 

\section{Constructive Diameter Bounds} 
\label{sec:constdiam}
Our goal in what remains is to extend the ordered shadow pivot rule argument for $0/1$-polytopes in \cite{01simplex} to $(0,k)$-lattice polytopes. To do this, we first need to refine our explanation of $\mathbf{x}_{\sigma}$-maximal vertices. Recall that a signed permutation $\sigma: [n] \to \pm [n]$ is any function that satisfies $\sigma_{+}(i) = |\sigma(i)|$ is a permutation of $[n]$.  Furthermore, in what follows, let $\mathbf{x}_{-i} = -\mathbf{x}_{i}$ to simplify notation. 

\begin{lemma}\label{lem:lexord}
Let $\sigma: [n] \to \pm[n]$ be a signed permutation, and define 
\[\mathbf{x}_{\sigma} = (\text{\normalfont sign}(\sigma(1))\alpha^{|\sigma(1)|}, \text{\normalfont sign}(\sigma(2))\alpha^{|\sigma(2)|}, \dots, \text{\normalfont sign}(\sigma(n))\alpha^{|\sigma(n)|})\]
for some $\alpha \geq 2k+1$. Then $\mathbf{x}_{\sigma}^{\T}$ induces a lexicographic order on the set of lattice points in $[-k,k]^{d}$. That is
\[\mathbf{x}_{\sigma}^{\T}(\mathbf{x}_{1}, \mathbf{x}_{2}, \dots, \mathbf{x}_{d}) < \mathbf{x}_{\sigma}^{\T} (\mathbf{y}_{1}, \mathbf{y}_{2}, \dots, \mathbf{y}_{d})\]
if and only if for some $j \in [n]$, we have $\mathbf{y}_{\sigma^{-1}(j)} -\mathbf{x}_{\sigma^{-1}(j)} > 0$ and $\mathbf{x}_{\sigma^{-1}(i)} = \mathbf{y}_{\sigma^{-1}(i)}$ for all $i > j$. 
\end{lemma}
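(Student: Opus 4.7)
The plan is to reduce the claim to the standard "large base dominates lower-order digits" argument for positional numeral systems. First I would rewrite the inner product by reindexing the sum. Since $\sigma_{+}=|\sigma|$ is a permutation of $[n]$, substituting $j=|\sigma(i)|$ and invoking the convention $\mathbf{x}_{-i}=-\mathbf{x}_i$ gives
\[\mathbf{x}_{\sigma}^{\T}\mathbf{x} \;=\; \sum_{i=1}^{n} \text{sign}(\sigma(i))\,\alpha^{|\sigma(i)|}\, \mathbf{x}_{i} \;=\; \sum_{j=1}^{n}\alpha^{j}\,\mathbf{x}_{\sigma^{-1}(j)}.\]
Subtracting, I obtain
\[\mathbf{x}_{\sigma}^{\T}(\mathbf{y}-\mathbf{x})\;=\;\sum_{j=1}^{n}\alpha^{j}\,c_{j}, \qquad c_{j}:=\mathbf{y}_{\sigma^{-1}(j)}-\mathbf{x}_{\sigma^{-1}(j)},\]
and since every coordinate of $\mathbf{x},\mathbf{y}$ lies in $\{-k,\dots,k\}$, each $c_{j}$ is an integer with $|c_{j}|\le 2k$.

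Next I would isolate the largest index at which $\mathbf{x}$ and $\mathbf{y}$ disagree in the $\sigma$-ordering. If no such index exists then $\mathbf{x}=\mathbf{y}$ and both sides of the biconditional are vacuously false; otherwise let $j_{0}$ be the largest $j$ with $c_{j}\neq 0$. Using $|c_{j}|\le 2k$ on the tail and summing a geometric series yields
\[\Bigl|\sum_{i=1}^{j_{0}-1}\alpha^{i}c_{i}\Bigr| \;\le\; 2k\sum_{i=1}^{j_{0}-1}\alpha^{i}\;=\;2k\cdot\frac{\alpha^{j_{0}}-\alpha}{\alpha-1}.\]
For $\alpha\ge 2k+1$ this upper bound is strictly smaller than $\alpha^{j_{0}}$: clearing denominators, the inequality $\alpha^{j_{0}}(\alpha-1)>2k(\alpha^{j_{0}}-\alpha)$ is equivalent to $\alpha^{j_{0}+1}+2k\alpha>(2k+1)\alpha^{j_{0}}$, which holds because $\alpha^{j_{0}+1}\ge (2k+1)\alpha^{j_{0}}$ and $2k\alpha>0$. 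Thus the leading term $\alpha^{j_{0}}c_{j_{0}}$, whose absolute value is at least $\alpha^{j_{0}}$, strictly dominates the tail, and therefore $\text{sign}\bigl(\mathbf{x}_{\sigma}^{\T}(\mathbf{y}-\mathbf{x})\bigr)=\text{sign}(c_{j_{0}})$.

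This immediately delivers the biconditional. The forward direction assumes the lexicographic condition, which places $j_{0}=j$ with $c_{j_{0}}>0$, so $\mathbf{x}_{\sigma}^{\T}\mathbf{x}<\mathbf{x}_{\sigma}^{\T}\mathbf{y}$. Conversely, if $\mathbf{x}_{\sigma}^{\T}\mathbf{x}<\mathbf{x}_{\sigma}^{\T}\mathbf{y}$ then the sign computation forces $c_{j_{0}}>0$ at the top disagreement index, which is precisely the stated lexicographic condition.

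The only real obstacle is bookkeeping: keeping the signed-permutation convention $\mathbf{x}_{-i}=-\mathbf{x}_{i}$ consistent through the reindexing so that $\sigma^{-1}(j)$ unambiguously carries the correct sign. Once that substitution is made cleanly, the bound $\alpha\ge 2k+1$ turns out to be exactly the threshold at which the top digit overwhelms the accumulated error from all lower digits, and the rest is a one-line geometric series estimate.
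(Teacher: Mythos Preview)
Your argument is correct and is essentially the paper's own proof: both reindex $\mathbf{x}_{\sigma}^{\T}\mathbf{x}$ as $\sum_{j}\alpha^{j}\mathbf{x}_{\sigma^{-1}(j)}$, locate the top disagreement index, and bound the tail by the geometric series $2k\sum_{i<j_{0}}\alpha^{i}<\alpha^{j_{0}}$ using $\alpha\ge 2k+1$. The only organizational difference is that the paper first treats the identity permutation and then reduces the general case to it, whereas you handle arbitrary $\sigma$ from the outset; the core estimate is identical.
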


\begin{proof}
Assume first that $\sigma(i) = i$ for all $i \in [n]$. Let $\mathbf{x}, \mathbf{y} \in [0,k]^{d}$. Then $\mathbf{x}_{\sigma}^{\T} \mathbf{x} \leq \mathbf{x}_{\sigma}^{\T} \mathbf{y}$ if and only if
\[\sum_{i=1}^{n} \mathbf{x}_{i} \alpha^{i} = \mathbf{x}_{\sigma}^{\T} \mathbf{x} \leq \mathbf{x}_{\sigma}^{\T} \mathbf{y} = \sum_{i=1}^{n} \mathbf{y}_{i} \alpha^{i},\]
which is true if and only if 
\[\sum_{i=1}^{n} (\mathbf{y}_{i} - \mathbf{x}_{i}) \alpha^{i} \geq 0.\]
Let $j = \argmax_{i \in [n]} \mathbf{y}_{i} - \mathbf{x}_{i} \neq 0$. Suppose that $\mathbf{y}_{j} - \mathbf{x}_{j} > 0$. Then 
\[(\mathbf{y}_{j} -\mathbf{x}_{j})\alpha^{j} \geq \alpha^{j} \geq \alpha^{j} - \alpha \geq 2k\left(\frac{\alpha^{j} - \alpha}{\alpha -1}\right) = 2k \sum_{i=1}^{j-1} \alpha^{i} \geq \sum_{i=1}^{j-1} (\mathbf{x}_{i} -\mathbf{y}_{i}) \alpha^{i}, \]
since $\mathbf{x}_{i} - \mathbf{y}_{i} \leq k - (-k) = 2k$. Hence, when $\mathbf{y}_{j} - \mathbf{x}_{j} > 0$,
\[\sum_{i=1}^{n} (\mathbf{y}_{i} - \mathbf{x}_{i}) \alpha^{i} = \sum_{i=1}^{j} (\mathbf{y}_{i}-\mathbf{x}_{i}) \alpha^{i} = (\mathbf{y}_{j}-\mathbf{x}_{j})\alpha^{j} - \sum_{i=1}^{j-1} (\mathbf{x}_{i} - \mathbf{y}_{i}) \alpha^{i} > 0.\]
Thus, $\mathbf{x}_{\sigma}$ induces exactly the inverse lexicographic ordering on lattice points $[0,k]^{n}$. Namely, $\mathbf{x}_{\sigma}^{\T}(\mathbf{x}_{1}, \mathbf{x}_{2}, \dots, \mathbf{x}_{n}) < \mathbf{x}_{\sigma}^{\T}(\mathbf{y}_{1}, \mathbf{y}_{2}, \dots, \mathbf{y}_{n})$ if and only if the largest coordinate $j$ on which they differ satisfies $\mathbf{y}_{j} - \mathbf{x}_{j} > 0$. 

Let $\sigma$ be arbitrary. Then we have
\[\mathbf{x}_{\sigma}^{\T} \mathbf{x} = \sum_{i=1}^{n} \text{\normalfont sign}(\sigma(i)) \mathbf{x}_{i} \alpha^{\sigma_{+}(i)} = \sum_{i=1}^{n} \mathbf{x}_{\sigma^{-1}(i)} \alpha^{i}. \]
Thus, $\mathbf{x}_{\sigma}$ induces the same ordering but on $\mathbf{x}_{\sigma^{-1}(i)}$. From which, the more general conditions follow. 
\end{proof}

From the total ordering, we may provide a useful characterization of the $\mathbf{x}_{\sigma}$-maximal vertices of a $(0,k)$-lattice polytope $P$. 

\begin{cor} \label{cor:flag}
Let $P$ be a $(0,k)$-lattice polytope, and let $\sigma$ be a signed permutation. Define a flag of faces $G_{n} \supseteq G_{n-1} \supseteq \dots \supseteq G_{1} \supseteq G_{0}$ by $G_{n} = P$ and $G_{i-1}$ is the $\mathbf{e}_{\sigma^{-1}(i)}$-maximal face of $G_{i}$. Then $G_{0}$ is the unique $\mathbf{x}_{\sigma}$-maximal vertex. 
\end{cor}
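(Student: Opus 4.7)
The plan is to derive the corollary directly from Lemma \ref{lem:lexord} by recognizing the flag $G_n \supseteq \cdots \supseteq G_0$ as the geometric realization of lexicographic maximization. Lemma \ref{lem:lexord} says that $\mathbf{x}_\sigma^{\T}$ orders lattice points in $[-k,k]^n$ lexicographically with coordinate $\mathbf{x}_{\sigma^{-1}(n)}$ most significant, then $\mathbf{x}_{\sigma^{-1}(n-1)}$, down to $\mathbf{x}_{\sigma^{-1}(1)}$, under the convention $\mathbf{x}_{-j} = -\mathbf{x}_j$. Hence maximizing $\mathbf{x}_\sigma^{\T}$ on $P$ is the same as pinning down, from most to least significant, the maximum attainable value of each $\mathbf{x}_{\sigma^{-1}(i)}$ subject to all earlier maxima already being achieved.

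First I would prove, by downward induction on $i$ from $n$ to $0$, that
\[
G_i = \{\mathbf{x} \in P : \mathbf{x}_{\sigma^{-1}(j)} = M_j \text{ for all } j > i\},
\]
where $M_j$ denotes the maximum of $\mathbf{x}_{\sigma^{-1}(j)}$ over $G_j$. The base case $G_n = P$ holds vacuously. For the inductive step, the defining property that $G_{i-1}$ is the $\mathbf{e}_{\sigma^{-1}(i)}$-maximal face of $G_i$ appends exactly the one further equality $\mathbf{x}_{\sigma^{-1}(i)} = M_i$ to the inductive description of $G_i$, which is precisely the lex-max update.

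Next I would conclude at $i = 0$. All coordinates $\sigma^{-1}(j)$ for $j \in [n]$ have been fixed; since $\sigma_+$ is a permutation of $[n]$, the indices $|\sigma^{-1}(1)|, \dots, |\sigma^{-1}(n)|$ exhaust $[n]$, and the sign convention $\mathbf{x}_{-j} = -\mathbf{x}_j$ then pins every one of the $n$ standard coordinates. Therefore $G_0$ contains at most one point, and since every $G_i$ is nonempty as a nonempty face of $G_{i+1}$, $G_0$ is exactly one vertex. Uniqueness of this vertex as the $\mathbf{x}_\sigma$-maximum then follows from Lemma \ref{lem:lexord}: any other vertex disagrees with $G_0$ on some coordinate and is therefore strictly lex-smaller, hence strictly $\mathbf{x}_\sigma$-smaller.

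The main care I anticipate is purely notational: ensuring that $\mathbf{e}_{\sigma^{-1}(i)}$ under the convention $\mathbf{e}_{-j} = -\mathbf{e}_j$ is the correct dual object to the coordinate $\mathbf{x}_{\sigma^{-1}(i)}$ under $\mathbf{x}_{-j} = -\mathbf{x}_j$, so that "$\mathbf{e}_{\sigma^{-1}(i)}$-maximal face of $G_i$" literally coincides with "$\mathbf{x}_{\sigma^{-1}(i)}$ attains its maximum on $G_i$." Beyond this signed-permutation bookkeeping, there is no substantive obstacle, since the corollary is essentially a geometric rephrasing of Lemma \ref{lem:lexord}.
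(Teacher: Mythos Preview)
Your proposal is correct and follows essentially the same approach as the paper: both arguments use the lexicographic characterization from Lemma~\ref{lem:lexord} together with an induction along the flag. The only cosmetic difference is direction---the paper takes the (unique) $\mathbf{x}_\sigma$-maximal vertex $\mathbf{v}$ and inductively shows $\mathbf{v}\in G_i$ for each $i$, whereas you explicitly describe each $G_i$ by its fixed coordinates and then argue that $G_0$ is lex-maximal; these are the same idea read in opposite directions.
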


\begin{proof}
Observe first that $\mathbf{x}_{\sigma^{-1}(n)} = \mathbf{y}_{\sigma^{-1}(n)}$ for all $\mathbf{x}, \mathbf{y} \in G_{n-1}$, since $G_{n-1}$ is the $\mathbf{e}_{\sigma^{-1}(n)}$-maximal face of $G_{n}$. Then by induction, $\mathbf{x}_{\sigma^{-1}(j)} = \mathbf{y}_{\sigma^{-1}(j)}$ for all $j \geq i$ and for all $\mathbf{x}, \mathbf{y} \in G_{i-1}$. Hence, all coordinates of $G_{0}$ are determined meaning that $G_{0}$ is a vertex. Let $\mathbf{v}$ be an $\mathbf{x}_{\sigma}$-maximal vertex, which is unique, since the lexicographic order from Lemma \ref{lem:lexord} is a total order on $[-k,k]^{d}$. Note that $\mathbf{v}_{\sigma^{-1}(n)} \leq \mathbf{x}_{\sigma^{-1}(n)}$ for all $\mathbf{x} \in G_{n-1}$, since $G_{n-1}$ is the $\mathbf{e}_{\sigma^{-1}(n)}$-maximal face of $P$. At the same time, $\mathbf{v}$ is $\mathbf{x}_{\sigma}$-maximal, so $\mathbf{v}_{\sigma^{-1}(n)} \geq \mathbf{x}_{\sigma^{-1}(n)}$ by the lexicographic order from Lemma \ref{lem:lexord} meaning that $\mathbf{v}_{\sigma^{-1}(n)} = \mathbf{x}_{\sigma^{-1}(n)}$. Suppose that $\mathbf{v} \in G_{j}$. Then $\mathbf{v}_{\sigma^{-1}(i)} = \mathbf{x}_{\sigma^{-1}(i)}$ and $\mathbf{v}_{\sigma^{-1}(j)} \leq \mathbf{x}_{\sigma^{-1}(j)}$ for all $i > j$ and all $\mathbf{x} \in G_{j-1}$. Hence, again by the maximality of $\mathbf{v}$, $\mathbf{v}_{\sigma^{-1}(j)} = \mathbf{x}_{\sigma^{-1}(j)}$, so $\mathbf{v} \in G_{i-1}$. Note also that $\mathbf{v} \in P = G_{n}$, so by induction, $\mathbf{v} \in G_{i}$ for all $i \in [n] \cup [0]$ meaning that $\mathbf{v} = G_{0}$.  
\end{proof}

From this description of the orientation, we may describe exactly how to find a path to an $\mathbf{x}_{\sigma}$-maximal vertex.

\begin{lemma} \label{lem:GIpath}
Let $P$ be a $d$-dimensional $(0,k)$-lattice polytope, and let $\sigma \in B_{n}$ be a signed permutation. Let $\mathbf{u}$ be any starting a vertex. Then the $\mathbf{x}_{\sigma}$-maximal vertex $\mathbf{v}$ may be reached from $\mathbf{u}$ in at most $dk$ steps by applying the greatest improvement pivot rule. Furthermore, for any starting vertex, there is a choice of $\sigma$ such that the greatest improvement path to the $\mathbf{x}_{\sigma}$-maximal vertex is of length at most $d \lfloor k/2 \rfloor$.
\end{lemma}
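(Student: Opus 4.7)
The plan is to track the greatest improvement path through the flag $P = G_n \supseteq G_{n-1} \supseteq \cdots \supseteq G_0 = \{\mathbf{v}\}$ provided by Corollary \ref{cor:flag}, where $G_{j-1}$ is the $\mathbf{e}_{\sigma^{-1}(j)}$-maximal face of $G_j$.

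For the $dk$ bound, I would first argue that the greatest improvement path respects the flag: once at a vertex $\mathbf{w} \in G_j$, the next step must remain in $G_j$, since any outgoing edge strictly decreases some signed coordinate $\mathbf{x}_{\sigma^{-1}(i)}$ with $i > j$, and by Lemma \ref{lem:lexord} the dominating weight $\alpha^i$ in $\mathbf{x}_\sigma^{\T}$ makes such a step non-improving. Second, whenever $\mathbf{w} \in G_j \setminus G_{j-1}$, the same lex argument forces greatest improvement to pick a neighbor within $G_j$ that strictly increases $\mathbf{x}_{\sigma^{-1}(j)}$; since vertices are lattice points, each such increment is at least one. Hence the number of steps spent in $G_j \setminus G_{j-1}$ is at most $\mathbf{v}_{\sigma^{-1}(j)} - a_j \leq k$, where $a_j$ denotes the value of $\mathbf{x}_{\sigma^{-1}(j)}$ on entering $G_j$. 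Since at most $d$ transitions are nontrivial (one per unit of dimension drop), the total length is at most $dk$.

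For the $d \lfloor k/2 \rfloor$ bound I would choose the signs of $\sigma$ based on $\mathbf{u}$: set $\mathrm{sign}(\sigma(c)) = -1$ if $\mathbf{u}_c \leq \lfloor k/2 \rfloor$ and $+1$ otherwise, leaving $\sigma_+$ to be specified as convenient. This choice guarantees that $\max_{\mathbf{x} \in P} \mathrm{sign}(\sigma(c)) \mathbf{x}_c - \mathrm{sign}(\sigma(c)) \mathbf{u}_c \leq \lfloor k/2 \rfloor$ for every coordinate $c$, from $P \subseteq [0,k]^n$ alone. The aim is to upgrade this per-coordinate estimate to a per-phase estimate $\mathbf{v}_{\sigma^{-1}(j_\ell)} - a_\ell \leq \lfloor k/2 \rfloor$, so that the at-most-$d$ nontrivial phases combine to give $d \lfloor k/2 \rfloor$ in total.

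The main obstacle will be that $a_\ell$ can differ from the signed initial value $\mathbf{u}_{\sigma^{-1}(j_\ell)}$, because coordinate $\sigma^{-1}(j_\ell)$ may drift during earlier higher-priority phases. My plan to handle this is to show that any such drift is internally compensated: whenever earlier phases push $a_\ell$ below its initial signed value, they do so by restricting attention to a correspondingly narrower face $G_{j_\ell}$, on which $\mathbf{v}_{\sigma^{-1}(j_\ell)}$ shrinks in parallel, preserving the difference $\mathbf{v}_{\sigma^{-1}(j_\ell)} - a_\ell \leq \lfloor k/2 \rfloor$. This should exploit the lex tie-breaking of greatest improvement—each step first maximizes the current-phase signed coordinate and then the remaining ones in priority order—together with the convexity of the flag faces. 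If the direct compensation argument falls short, I would instead additionally tailor the permutation $\sigma_+$ (for example, processing coordinates by decreasing distance of $\mathbf{u}_c$ from the nearer extremum) and establish the per-phase bound by induction on the dimension of the faces in the flag.
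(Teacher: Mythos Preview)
Your argument for the $dk$ bound is essentially the paper's: both track the flag $G_n \supseteq \cdots \supseteq G_0$, show that greatest improvement cannot leave $G_i$ once it enters, and that within $G_i \setminus G_{i-1}$ every step strictly increases the signed coordinate $\mathbf{x}_{\sigma^{-1}(i)}$, giving at most $k$ steps per nontrivial phase and at most $d$ nontrivial phases.

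For the $d\lfloor k/2\rfloor$ bound your approach diverges from the paper's and has a real gap. You fix the signs of $\sigma$ once and for all from the \emph{starting} vertex $\mathbf{u}$, and then try to control drift of lower-priority coordinates during higher-priority phases via a ``compensation'' claim: that if $a_\ell$ drops below its initial signed value, the target $\mathbf{v}_{\sigma^{-1}(j_\ell)}$ drops in parallel. This is not justified. The target value on $G_{j_\ell}$ is determined by the geometry of that face and need not move with the path; nothing in the lex structure of greatest improvement forces the maximum of a lower-priority signed coordinate on $G_{j_\ell}$ to track the path's current value of that coordinate. Concretely, during phase $n$ the path can push $\mathbf{x}_{\sigma^{-1}(n-1)}$ all the way to the ``wrong'' extreme of $[0,k]$ (since that coordinate is only a tiebreaker), while the face $G_{n-1}$ still contains vertices with $\mathbf{x}_{\sigma^{-1}(n-1)}$ at the opposite extreme; phase $n-1$ then costs nearly $k$ steps regardless of how you chose the sign from $\mathbf{u}$. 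Your fallback of also tailoring $\sigma_+$ from $\mathbf{u}$ does not escape this, because the problem is that the relevant coordinate values are those at the \emph{entry point} to each phase, not at $\mathbf{u}$.

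The paper's fix is to build $\sigma$ \emph{adaptively}: after arriving at some vertex of $G_i$, look at the \emph{current} vertex, pick any not-yet-used coordinate $c$ and a sign so that the current signed value is within $\lfloor k/2\rfloor$ of its maximum on $G_i$ (always possible, since every integer in $[0,k]$ is within $\lfloor k/2\rfloor$ of $0$ or $k$), and set $\sigma^{-1}(i)=\pm c$ accordingly. Each phase then costs at most $\lfloor k/2\rfloor$ by construction, with no drift to worry about. The permutation $\sigma$ is only fully determined at the end, but the flag and the per-phase bounds are fixed as you go.
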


\begin{proof}
Let $G_{n} \supseteq G_{n-1} \supseteq \dots \supseteq G_{0}$ be the flag of faces from Corollary \ref{cor:flag}. Let $V_{i}$ be the set of vertices of $G_{i}$. Note that $\mathbf{u} \in V_{n}$, since $G_{n} = P$. Let $\mathbf{x} \in G_{n-1}$. Then $\mathbf{x}_{\sigma^{-1}(n)}$ is maximal, so $\mathbf{x}_{\sigma}^{\T}\mathbf{x} > \mathbf{x}_{\sigma}^{\T} \mathbf{y}$ for all $\mathbf{y} \in V_{n} \setminus V_{n-1}$ via the lexicographic ordering in Lemma \ref{lem:lexord}. Then by the same reasoning $\mathbf{x}_{\sigma}^{\T} \mathbf{x} > \mathbf{x}_{\sigma}^{\T} \mathbf{y}$ for all $\mathbf{x} \in V_{i}$ and $\mathbf{y} \in V_{i+1} \setminus V_{i}$. By induction, for all $\mathbf{x} \in V_{n} \setminus V_{i}$ and $\mathbf{y} \in G_{i}$, we have that $\mathbf{x}_{\sigma}^{\T} \mathbf{x} > \mathbf{x}_{\sigma}^{\T} \mathbf{y}$. 

Suppose that $\mathbf{v} \in V_{i} \setminus V_{i-1}$. We just showed that $\mathbf{x}_{\sigma}^{\T} \mathbf{v} > \mathbf{x}_{\sigma}^{\T} \mathbf{y}$ for all $u \in V_{n} \setminus V_{i}$, so all $\mathbf{x}_{\sigma}$-improving neighbors of $\mathbf{v}$ must be in $V_{i}$. Equivalently, all $\mathbf{x}_{\sigma}$-improving neighbors must not be $\mathbf{e}_{\sigma^{-1}(j)}$-improving for $j > i$. Note that, by the lexicographic order from Lemma \ref{lem:lexord}, any $\mathbf{e}_{\sigma^{-1}(i)}$-improving neighbor in $V_{i}$ will dominate one that is not $\mathbf{e}_{\sigma^{-1}(i)}$-improving. Hence, the greatest improving path is $\mathbf{e}_{\sigma^{-1}(i)}$-monotone until reaching $V_{i-1}$. Thus, for $\mathbf{v} \in V_{i}$, the greatest improving path is $\mathbf{e}_{\sigma^{-1}(i)}$-monotone until it reaches $G_{i-1}$. Note that any $\mathbf{e}_{\sigma^{-1}(i)}$-monotone path is of length at most $k$, since $P$ is a $(0,k)$-lattice polytope. Since $\{G_{i}\}$ is a flag of faces, at most $d$ of $\{G_{i}: i \geq 1\}$ are distinct. Hence, the total length of the path is at most $dk$.  

To find a nearby $\mathbf{x}_{\sigma}$-maximal vertex for some choice of $\sigma$, we may optimize this strategy. First choose the coordinate $j_{1}$ such that $\mathbf{v}_{j_{1}}$ is closest to $0$ or $k$. If it is closest to $k$, follow an $\mathbf{e}_{j_{1}}$-monotone path to $\mathbf{e}_{j_{1}}$-maximal face $G_{n-1}$. If it is closest to $0$, follow any $-\mathbf{e}_{j_{1}}$-monotone path to the $-\mathbf{e}_{j_{1}}$-maximal face $G_{n-1}$. Continue this for each $G_{i}$, while choosing the coordinate closest to $0$ or $k$ that has not already been chosen. This will yield a flag of faces $G_{n} \supseteq G_{n-1} \supseteq \dots \supseteq G_{0}$ of the desired type. At each step there must be a coordinate that varies within $\lfloor k/2 \rfloor$ of $0$ or $k$. Hence, the total length of the path is at most $d \lfloor k/2\rfloor.$ 
\end{proof}

We may furthermore use our description of $\mathbf{x}_{\sigma}$-maximal vertices to describe the path. 

\begin{proof}[Proof of Theorem \ref{mainthm:startpt}]
Let $G_{n} \supseteq G_{n-1} \supseteq \dots \supseteq G_{0} = \mathbf{v}$ be the flag of faces from Corollary \ref{cor:flag}. Then, $\mathbf{v}$ is the $\mathbf{c}$-maximum of $G_{0}$. Define a path starting with $\mathbf{v}^{0} = \mathbf{v}$, and suppose that $\mathbf{v}^{i}$ is a $\mathbf{c}$-maximum of $G_{i}$, where $G_{i+1} \neq G_{i}$. Since $G_{i}$ is the $\mathbf{e}_{\sigma^{-1}(i)}$-maximal face of $G_{i+1}$, there is a $\mathbf{c}$-coherent $-\mathbf{e}_{\sigma^{-1}(i)}$-monotone path from $\mathbf{v}^{i}$ to a $\mathbf{c}$-maximum of $G_{i+1}$. Furthermore, by Corollary \ref{cor:lengthbound}, that path is of length at most $|\{\mathbf{e}_{\sigma^{-1}(i)}^{\T} \mathbf{x}: \mathbf{x} \in V(P)\}| -1 \leq |[k] \cup \{0\}| -1 = k$, since $P$ is a $(0,k)$-lattice polytope. Following paths of this form starting at $\mathbf{v}$ yields a path of length at most $k$ in at most $d$ distinct faces. Hence, the length of the path is at most $dk$.
\end{proof}

\subsection{Implementation} \label{subsec:imp}

For non-degenerate $(0,k)$-lattice polytopes, the description from what we have shown thus far suffices to develop an implementation by running the Simplex method twice with prescribed pivot rules. Unfortunately, degeneracy remains as an issue. Our paths are constructive, but finding such an edge in degenerate cases may be computationally expensive. In Section $3.4$ of \cite{01simplex}, by building on work in \cite{KleeKlein} and \cite{Murty2009}, we introduced a method for implementing shadow pivot rules such that the non-degenerate pivots will follow a $\mathbf{c}$-coherent $\mathbf{d}$-monotone path so long as one starts at the $\mathbf{c}$-maximum of the $\mathbf{d}$-minimal face of the polytope for any pair of linear objective functions $\mathbf{c}$ and $\mathbf{d}$. This method avoids cycling via attaching a lexicographic rule, but it does not necessarily avoid stalling (i.e., taking exponentially many degenerate pivots). Our approach to dealing with degeneracy is to split the LP into a sequence of LPs on faces of $P$ such that the non-degenerate steps across each LP follows exactly the path from Theorem \ref{mainthm:0ksimpmeth}.

\begin{proof}[Proof of Theorem \ref{mainthm:0ksimpmeth}]
To do this, first apply Phase $0$ of the Simplex method to find some $\mathbf{v} \in P$. Let $G_{n} = P$. Then we will follow a path of length $d \lfloor k/2 \rfloor$ to reach an $\mathbf{x}_{\sigma}$-maximal vertex for some $\sigma \in B_{n}$. First choose the coordinate $j_{1}$ such that $\mathbf{v}_{j_{1}}$ is closest to $0$ or $k$. If it is closest to $k$, apply the Simplex method with Bland's pivot rule to the LP $\max(\mathbf{e}_{j_{1}}^{\T} \mathbf{x}: \mathbf{x} \in G_{n})$ to reach the $\mathbf{e}_{j_{1}}$-maximal face $G_{n-1}$. If it is closest to $0$, apply the Simplex method with Bland's pivot rule to the LP $\max(-\mathbf{e}_{j_{1}}^{\T} \mathbf{x} : \mathbf{x} \in G_{n})$ to reach the $-\mathbf{e}_{j_{1}}$-maximal face $G_{n-1}$. Continue this until reaching $G_{i-1}$ by warm starting the LP on $G_{i}$ at the vertex found from maximizing on $G_{i+1}$, while choosing the coordinate closest to $0$ or $k$ that has not already been chosen. Such a path is of the type described in Lemma \ref{lem:GIpath} for reaching a nearest $\mathbf{x}_{\sigma}$-maximal vertex for some $\sigma \in B_{n}$. We may reconstruct $\sigma$ from the choice of $G_{i}$, and the length of the path is at most $d(\lfloor k/2 \rfloor)$.  

Once reaching $G_{0}$, the vertex in the path is exactly the $\mathbf{x}_{\sigma}$-maximal vertex $\mathbf{v}^{\sigma}$. Then to follow the path described in Theorem \ref{mainthm:startpt}, initialize $G_{1}$ at $\mathbf{v}^{\sigma}$ and apply the corresponding shadow pivot rule to follow the $\mathbf{c}$-coherent $-\mathbf{e}_{\sigma^{-1}(1)}$-monotone path from $\mathbf{v}^{\sigma}$ to the $\mathbf{c}$-maximum of $G_{1}$. Then to more generally go from a previously found $\mathbf{c}$-maximum of $G_{i-1}$ to the $\mathbf{c}$-maximum of $G_{i}$, initialize $G_{i}$ at that $\mathbf{c}$-maximum of $G_{i-1}$ and apply the corresponding to shadow-pivot rule to follow the $\mathbf{c}$-coherent $-\mathbf{e}_{\sigma^{-1}(i)}$-monotone path to a $\mathbf{c}$-maximum of $G_{i}$. By the results of Section $3.4$ of \cite{01simplex}, no cycling can occur on these paths, and the non-degenerate steps are exactly the corresponding coherent paths. Then the total length followed by the path is at most $dk$ by Theorem \ref{mainthm:startpt}. 
\end{proof}

This proposition shows that the existence of such a short path yields an algorithm for solving LPs on lattice polytopes even with degeneracy. A similar strategy also works to make the monotone diameter bounds for half-integral and $(m+1)$-level polytopes into algorithms. However, one would hope that there exists a single pivot rule for the Simplex method such that the number of non-degenerate steps in each of these cases satisfies the same or better bounds. We leave the existence of such a pivot rule as an open question. 

\section*{Acknowledgments}  
I would like to thank Jes\'{u}s De Loera, Sean Kafer, Laura Sanit\`{a}, and Stefan Weltge for useful discussions. I am also grateful for the excellent collaborative learning environment of the 2021 Tropical Geometry and Geometry of Linear Programming workshop at the Hausdorff Institute of Mathematics at Bonn and financial support from the NSF GRFP and NSF DMS-1818969. 

\bibliographystyle{amsplain}
\bibliography{References.bib} 


\end{document}